\newtheorem{prop}{Proposition}[section]
\newtheorem{LM}{Lemma}[section]
\newtheorem{thm}{Theorem}[section]
\newtheorem{cor}{Corollary}[section]
\title {$\mathbf{Distinguished \ principal \ series \ representations}$ $\mathbf{for \ GL_n \ over \ a \ p-adic \ field}$}
\author{Nadir Matringe}
\begin{document}
\maketitle

\section{Introduction}

For $K/F$ a quadratic extension of p-adic fields,  let $\sigma$ be the
 conjugation relative to this extension, and $\eta _{K/F}$ be the
 character of $F^*$ with kernel norms of $K^*$.\\
If $\pi$ is a smooth irreducible representation of $GL(n,K)$, and $\chi$ a character of $F^*$, the
 dimension of the space of linear forms on its space, which transform by $\chi$ under $GL(n,F)$ (with respect to the action $[(L,g)\mapsto L\circ\pi(g)]$) , is known to be at
 most one (Proposition 11, \cite{F}).
One says that $\pi$ is $\chi$-distinguished if this dimension is one, and says that $\pi$ is distinguished if it is $1$-distinguished.\\
In this article, we give a description of distinguished principal
 series representations of $GL(n,K)$.\\
The result (Theorem \ref{dist}) is that the the irreducible distinguished representations of the principal series of
$GL(n,K)$ are (up to isomorphism) those unitarily induced from a character $\chi=(\chi_1,...,\chi_n)$ of the maximal torus of diagonal matrices, such that there exists $r \leq n/2$, for which $ \chi
 _{i+1}^{\sigma} = {{\chi _i}} ^{-1} $ for $i=1,3,..,2r-1$, and ${\chi _i} _{|F^*} =1$
 for $ i > 2r$.
For the quadratic extension $\mathbb{C}/  \mathbb{R}$, it is known
 (cf.\cite{P}) that the analogous result is true for tempered representations.\\
For $n\geq 3$, this gives a counter-example (Corollary \ref{contre}) to a conjecture of Jacquet (Conjecture 1 in \cite{A}). This conjecture states that an irreducible representation $\pi$ of $GL(n,K)$ with central character trivial on $F^*$ is isomorphic to $\check{\pi}^{\sigma}$ if and only if it is distinguished or $\eta _{K/F}$-distinguished (where $\eta _{K/F}$ is the character of order 2 of $F^*$, attached by local class field theory to the extension $K/F$). For discrete series representations, the conjecture is verified, it was proved in \cite{K}.\\
Unitary irreducible distinguished principal series representations of $GL(2,K)$ were described in \cite{H}, and the general case of distinguished irreducible principal series representations of $GL(2,K)$ was treated in \cite{FH}. We use this occasion to give a different proof of the result for $GL(2,K)$ than the one in \cite{FH}.
To do this, in Theorems \ref{gamma1} and \ref{gamma2}, we extend a
 criterion of Hakim (th.4.1, \cite{H}) characterising smooth unitary 
irreducible distinguished representations of $GL(2,K)$ in terms of $\gamma$ factors at $1/2$,
 to all smooth irreducible distinguished representations of $GL(2,K)$.\\

\section{Preliminaries}

Let $\phi$ be a group automorphism, and $x$ an element of the group, we
 sometimes note $x^{\phi}$ instead of $\phi(x)$, and $x^{-\phi}$ the
 inverse of $x^{\phi}$.
If $\phi = x \mapsto h^{-1}xh$ for $h$ in the group, then $x^{h}$
 designs $x^{\phi}$.\\

Let $G$ be a locally compact totally disconnected group, $H$ a closed
 subgroup of $G$.\\
We note $\Delta_{G}$ the module of $G$, given by the relation $d_G(gx)=\Delta_{G}(g)d_G(x)$, for a right Haar measure $d_G$ on $G$.\\
Let $X$ be a locally closed subspace of $G$, with $H.X \subset X$.
If $V$ is a complex vector space, we note $D(X,V)$ the space of smooth
 $V$-valued functions on $X$ with compact support (if $V=\mathbb{C}$, we
 simply note it $D(X)$).\\
Let $\rho$ be a a smooth representation of $H$ in a complex vector
 space $V_{\rho}$, we note $D(H \backslash X, \rho, V_{\rho})$ the space of
 smooth $V_{\rho}$-valued functions $f$ on $X$, with compact support
 modulo $H$, which verify $f(hx)=\rho(h)
 f(x)$ for $h \in H$ and $x \in X$ (if $\rho$ is a character, we note
 it $D(H \backslash X, \rho)$).\\ 
We note $ind _H^G (\rho)$ the representation by right translations of
 $G$ in \\$D(H \backslash G, (\Delta _G /\Delta _H)^{1/2} \rho,
 V_{\rho})$.\\ 

Let $F$ be a non archimedean local field of characteristic zero, and
 $K$ a quadratic extension of $F$. We have $K = F(\delta)$ with $\delta ^2$ in $F^*$.\\
We note $| \ |_K$ and $| \ |_F$ the modules of $K$ and $F$
 respectively.\\
We note $\sigma$ the non trivial element of the Galois group $G(K/F)$
 of $K$ over $F$, and we use the same letter to design for its action on $M_n(K)$.\\
We note $N_{K/F}$ the norm of the extension $K/F$ and we note $\eta _{K/F}$ the nontrivial character of $F^*$ which is trivial on $N_{K/F}(K^*)$.\\
Whenever $G$ is an algebraic group defined over $F$, we note $G(K)$ its
 $K$-points and $G(F)$ its $F$-points.\\

The group $GL(n)$ will be noted $G_n$, its standard Borel subgroup will
 be noted $B_n$, its unipotent radical $U_n$, and the standard maximal split torus of diagonal matrices $T_n$.\\

We note $S$ the space of matrices $M$ in $G_n(K)$ satisfying $MM^{\sigma}=1$.\\

Everything in this paragraph is more or less contained in \cite{F1}, we give detailed proofs here for convenience of the reader.\\

\begin{prop}{ (\cite{S}, ch.10, prop.3)}\label{cohom}
 
We have a homeomorphism between $G_n(K)/G_n(F)$ and $S$ given by the
 map $S_n: g \mapsto g^{\sigma}g^{-1}$.

\end{prop}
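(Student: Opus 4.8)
The plan is to dispatch the easy structural facts about $S_n$ first, isolate surjectivity as the one substantial point (it is Hilbert's Theorem 90 for $GL_n$), and then upgrade the resulting continuous bijection to a homeomorphism by an openness argument. I expect the descent step (surjectivity) to be the only real obstacle.

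First I would observe that $S_n$ is continuous, being built from the coordinatewise conjugation $\sigma$, matrix inversion and matrix multiplication; that it takes values in $S$, since $S_n(g)\,S_n(g)^{\sigma}=g^{\sigma}g^{-1}gg^{-\sigma}=1$; and that it is right $G_n(F)$-invariant, since for $h\in G_n(F)$ we have $h^{\sigma}=h$, whence $S_n(gh)=g^{\sigma}hh^{-1}g^{-1}=S_n(g)$. Thus $S_n$ factors through a continuous map $\overline{S_n}\colon G_n(K)/G_n(F)\to S$. Injectivity of $\overline{S_n}$ is immediate: if $g_1^{\sigma}g_1^{-1}=g_2^{\sigma}g_2^{-1}$ then $(g_2^{-1}g_1)^{\sigma}=g_2^{-\sigma}g_1^{\sigma}=g_2^{-1}g_1$, so $g_2^{-1}g_1\in G_n(F)$ and $g_1,g_2$ define the same coset.

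The hard part is surjectivity of $S_n$. Given $M\in S$, from $MM^{\sigma}=1$ one also gets $M^{\sigma}M=1$, hence $M^{\sigma}=M^{-1}$ and $M^{-\sigma}=M$. I would introduce the $\sigma$-semilinear map $\Phi$ on column vectors of $K^n$ by $\Phi(v)=M^{-1}v^{\sigma}$, where $v^{\sigma}$ is the coordinatewise conjugate; then $\Phi^{2}(v)=M^{-1}(M^{-1}v^{\sigma})^{\sigma}=M^{-1}M^{-\sigma}v=v$, so $\Phi$ is an involution. Since $2\in F^*$, the $F$-linear map $v\mapsto\tfrac12(v+\Phi(v))$ projects $K^n$ onto $V_0:=\{v:\Phi(v)=v\}$ with kernel $V_1:=\{v:\Phi(v)=-v\}$, and one checks $V_1=\delta V_0$; hence $K^n=V_0\oplus\delta V_0$ over $F$ and $\dim_F V_0=n$, so any $F$-basis $v_1,\dots,v_n$ of $V_0$ is a $K$-basis of $K^n$. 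The matrix $g$ with columns $v_1,\dots,v_n$ then lies in $G_n(K)$, and $\Phi(v_j)=v_j$ for all $j$ says exactly $v_j^{\sigma}=Mv_j$, i.e.\ $g^{\sigma}=Mg$, i.e.\ $S_n(g)=M$. Conceptually this is nothing but the vanishing of $H^1(G(K/F),G_n(K))$ applied to the cocycle $\sigma\mapsto M$.

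Finally, to see that $\overline{S_n}$ is a homeomorphism it suffices to show $S_n$ is open. From $S_n(g_1g)=g_1^{\sigma}S_n(g)g_1^{-1}$ the group $G_n(K)$ acts continuously and transitively on $S$ by the twisted conjugation $g_1\cdot M=g_1^{\sigma}Mg_1^{-1}$, the map $S_n$ is equivariant, and the stabilizer of $1\in S$ is $G_n(F)$. Openness then follows either from the general fact that a transitive continuous action of a locally compact $\sigma$-compact group (such as $G_n(K)$) on a locally compact Hausdorff space identifies the orbit space with the space, or, concretely, from the implicit function theorem for $F$-analytic manifolds: by equivariance it is enough to work at $g=1$, where the differential $X\mapsto X^{\sigma}-X$ has image $\delta M_n(F)$, which is precisely the tangent space $\{Y:Y+Y^{\sigma}=0\}$ of $S$ at $1$, so $S_n$ is a submersion there and hence open. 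The continuity, injectivity and factoring through the quotient are formal, and the openness is handled by either of these two standard devices; only the descent step in the previous paragraph requires real work.
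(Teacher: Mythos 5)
The paper itself offers no proof of this proposition; it simply cites Serre (\emph{Corps locaux}, ch.~10, prop.~3), so there is nothing internal to compare your argument against. That said, your proof is correct and is the standard one. The formal observations (well-definedness into $S$, right $G_n(F)$-invariance, injectivity on cosets) are all verified correctly; the surjectivity argument via the $\sigma$-semilinear involution $\Phi(v)=M^{-1}v^{\sigma}$, the decomposition $K^n=V_0\oplus\delta V_0$ using $2\in F^*$ (legitimate since the paper assumes characteristic zero), and the identification of an $F$-basis of $V_0$ as a $K$-basis of $K^n$, is exactly Galois descent for vector spaces, i.e.\ the vanishing of $H^1(G(K/F),G_n(K))$, as you note. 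The openness step is handled correctly by either device you propose: the differential at the identity $X\mapsto X^{\sigma}-X$ does have image $\delta M_n(F)=\{Y:Y+Y^{\sigma}=0\}$, which is the tangent space of $S$ at $1$, so $S_n$ is a submersion there and equivariance propagates this everywhere; alternatively the open-mapping theorem for second-countable locally compact groups acting transitively on locally compact spaces applies. In short: a complete, correct, self-contained proof of a statement the paper merely imports.
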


\begin{prop}
\label{order2}

For its natural action on $S$, each orbit of $B_n (K)$ contains one and
 only one element of $\mathfrak{S} _n$ of order 2 or 1.

\end{prop}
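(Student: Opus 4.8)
The plan is to read off an invariant of each $B_n(K)$-orbit on $S$ from the Bruhat decomposition of the ambient group $G_n(K)$, show it is a complete invariant, and check which values it takes. Recall that $B_n(K)$ acts on $S$ by twisted conjugation $b\cdot M=bMb^{-\sigma}$, which under Proposition~\ref{cohom} becomes left translation on $B_n(K)\backslash G_n(K)/G_n(F)$, and note that a permutation matrix $w$ lies in $S$ precisely when $w^2=1$, since $w^\sigma=w$. First I would attach to $M\in S$ the element $w(M)\in\mathfrak{S}_n$ defined by $M\in B_n(K)w(M)B_n(K)$. Because $B_n(K)$ is $\sigma$-stable and $w^\sigma=w$, every Bruhat cell $B_n(K)wB_n(K)$ is stable under twisted conjugation, so $w(M)$ is constant on orbits and an orbit meets $\mathfrak{S}_n$ in at most one element. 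Moreover $w(M)$ is an involution: from $M^\sigma=M^{-1}$ one gets $B_n(K)w(M)B_n(K)=B_n(K)w(M)^{-1}B_n(K)$, hence $w(M)=w(M)^{-1}$. This already yields the ``at most one'' and ``order $\le 2$'' assertions.

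For existence I would fix an involution $w$ (allowing $w=1$) and prove that $S\cap B_n(K)wB_n(K)$ is the single orbit $B_n(K)\cdot w$, which is nonempty as it contains $w$. Take $M$ in this intersection, with Bruhat normal form $M=u_1twu_2$, $u_1\in U_n(K)$, $t\in T_n(K)$, $u_2\in U_w(K):=(U_n\cap w^{-1}\overline{U}_nw)(K)$, where $\overline{U}_n$ is the lower triangular unipotent subgroup; twisted-conjugating by $u_1^{-1}$ replaces $M$ by $twv$ with $v=u_2u_1^\sigma\in U_n(K)$. Now compare the Bruhat normal forms of the two sides of $M^\sigma=M^{-1}$. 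Writing $M^{-1}=v^{-1}wt^{-1}=(v^{-1}\,wt^{-1}w^{-1})\,w$ shows $M^{-1}$, hence $M^\sigma$, has trivial $U_w$-component, which forces $v\in\overline{U}_w(K):=(U_n\cap w^{-1}U_nw)(K)$; matching the torus components gives $wt^\sigma w^{-1}=t^{-1}$, and matching the residual unipotent components gives $v=t^\sigma(wv^{-\sigma}w^{-1})t^{-\sigma}$. Since $w^2=1$ one has $w\overline{U}_ww^{-1}=\overline{U}_w$, so $\Theta(x):=wt^{-1}x^\sigma tw^{-1}$ is a $\sigma$-semilinear automorphism of $\overline{U}_w(K)$, which the relation $wt^\sigma w^{-1}=t^{-1}$ makes an involution; the last identity says exactly that $v$ is a $\Theta$-cocycle. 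By the vanishing of the first cohomology of the split unipotent group $\overline{U}_w$ for this $\sigma$-semilinear action, $v$ is a coboundary, and a direct computation shows that twisted-conjugating $twv$ by the corresponding element of $\overline{U}_w(K)$ produces the monomial matrix $tw$. Finally $wt^\sigma w^{-1}=t^{-1}$ exhibits $t$ as a cocycle for the involution $x\mapsto wx^\sigma w^{-1}$ of $T_n(K)$, whose associated $F$-form is an induced torus (a product of copies of $\mathbb{G}_m$ and of $R_{K/F}\mathbb{G}_m$ indexed by the fixed points and the transpositions of $w$); Hilbert~90 provides $s\in T_n(K)$ with $s\cdot w=tw$, so $M\in B_n(K)\cdot w$, as wanted.

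The only non-elementary inputs are Hilbert~90 for $T_n(K)$ and for the induced torus attached to $w$, together with its unipotent analogue for $\overline{U}_w(K)$, all of which hold since $F$ has characteristic zero. I expect the main obstacle to be the reduction of a general element $twv\in S$ to the monomial $tw$ — the ``twisted unipotent Hilbert~90'' step — along with the bookkeeping in the normal-form comparison: keeping the two unipotent factors $U_w$ and $\overline{U}_w$ and the various occurrences of $\sigma$ straight, and verifying that the cocycle and coboundary identities dovetail with the explicit conjugation formulas.
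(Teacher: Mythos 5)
Your proof is correct and follows essentially the same route as the paper's: extract the Weyl element $w$ from the Bruhat decomposition as a complete orbit invariant, show it must be an involution, twist-conjugate by the $U_n$-factor to reduce to a monomial form $tw$ times a unipotent in $U_w'$, and then invoke a unipotent analogue of Hilbert~90 and Hilbert~90 for the twisted torus to finish. The paper packages these last two steps as Lemmas~\ref{nilp1} and~\ref{nilp2} (the torus lemma by hand plus the ordinary Hilbert~90 for $K^*$, the unipotent lemma by induction on a central series), while you phrase them as the vanishing of twisted $H^1$ for $\overline{U}_w$ and for the induced torus $\prod\mathbb{G}_m\times\prod R_{K/F}\mathbb{G}_m$; these are the same facts. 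The only organizational difference is that you twist-conjugate away $u_1$ first and then compare the normal forms of $M^\sigma$ and $M^{-1}$, whereas the paper compares $s$ with $s^{-\sigma}$ directly (writing $n_1^{-\sigma}=u^-u^+$ and matching $u^+$ with $n_2^+$) before twist-conjugating; the computations are equivalent.
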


\begin{proof}
We begin with the following:

\begin{LM}
\label{nilp1}

Let $w$ be an element of $\mathfrak{S} _n \subset G_n(K)$ of order at most $2$.
 
Let $\theta '$ be the involution of $T_n(K)$ given by $t \mapsto w^{-1}
 t^{\sigma} w$, then any $t\in T_n(K)$ with $t \theta '(t)=1$ is of the
 form $a/ \theta '(a)$ for some $a\in T_n(K)$. 

\end{LM}

\begin{proof}[Proof of Lemma \ref{nilp1}.]
There exists $r\leq n/2$ such that up to conjugacy, $w$ is\\
 $(1,2)(3,4)...(2r-1,2r)$.\\

We write $t=\left (\begin{array}{cccccccc}
z_1 &  &  &  &  &  &  & \\
    & {z_1}' &  &  &  &  &   \\
    &  & \ddots &  &  &  &  &  \\  
    &  &     & z_r &  &  &  &  \\
    &  &     &   & {z_r}'&  &  &  \\
    &  &     &   &               & z_{2r+1} &  &    \\
    &  &     &   &               &     & \ddots&    \\
    &  &     &   &               &     &       & z_n \\  
 \end{array}\right)$, hence for $i\leq r$, we have $z_i \sigma (z_i
 ')=1$, and $z_j \sigma(z_j)=1 $ for $j \geq 2r+1$.\\
 Hilbert's Theorem 90 asserts that each $z_j,j \geq 2r+1$ is of the
 form $u_{j-2r}/ \sigma(u_{j-2r})$, for some $u_{j-2r} \in K^*$.\\
We then take $a=\left (\begin{array}{cccccccc}
z_1 &  &  &  &  &  &  & \\
    & 1 &  &  &  &  &   \\
    &  & \ddots &  &  &  &  &  \\  
    &  &     & z_r &  &  &  &  \\
    &  &     &   & 1 &  &  &  \\
    &  &     &   &               & u_1 &  &    \\
    &  &     &   &               &     & \ddots&    \\
    &  &     &   &               &     &       & u_{n-2r} \\ 
\end{array}\right)$.\end{proof}

\begin{LM}
\label{nilp2}

Let $N$ be an algebraic connected unipotent group over $K$. Let $\theta$ be an
 involutive automorphism of $N(K)$. If $x \in N(K)$, verifies $x
 \theta(x)= 1_N$ then, there is $a \in N$ such that $x = \theta (a ^{-1}) a$.

\end{LM}

\begin{proof}[Proof of Lemma \ref{nilp2}:]
The group $N(K)$ has a composition series $ 1_N = N_0 \subset N_1 \subset ...
 N_{n-1} \subset N_n=N(K)$, such that each quotient $N_{i+1}/N_i$ is isomorphic
 to $(K,+)$, and each commutator subgroup $\left[ N, N_{i+1} \right]$ is a
 subgroup of $N_i$.\\
Now we prove the Lemma by induction on $n$:\\
If $n=1$, then $N(K)$ is isomorphic to $(K,+)$, one concludes taking $a =
 x/2$.\\
$n \mapsto n+1:$\\
suppose the Lemma is true for every $N(K)$ of length $n$.\\
Let $N(K)$ be of length $n+1$, we note $\bar{x}$ the class of $x$ in
 $N(K)/N_1$.\\
By induction hypothesis, one gets that there exists an element in $h
 \in N_1$, and an element $u$ in $N(K)$ such that $x= \theta(u^{-1})u h$.\\
Here $h$ lies in the center of $N(K)$, because $\left[ N(K), N_{1} \right] =
 1_N$.\\
 As $x \theta (x) = 1$, we get $h \theta(h) =1$. By induction
 hypothesis again, we get $h= \theta (b^{-1}) b$ for $b \in N_1$. We then take $a
 = ub$. \end{proof}   

We get back to the proof of the Proposition \ref{order2}.\\

For $w$ in $\mathfrak{S} _n$, one notes $U_w$ the subgroup of $U_n$
 generated by the elementary subgroups $U_{\alpha}$, with $\alpha$
 positive, and $w \alpha$ negative, and ${U_w}'$ the subgroup of $U_n$ generated
 by the elementary subgroups $U_{\alpha}$, with $\alpha$ positive, and
 $w \alpha$ positive. Then $U_n= {U_w}' U_w$.\\ 
Let $s$ be in $S$. According to Bruhat's decomposition, there is $w$ in
 $\mathfrak{S} _n$, and $a$ in $T_n(K)$, $n_1$ in $U_n(K)$ and
 ${n_2}^+$ in $U_w$, such that $s=n_1 a  w {n_2}^+$, with unicity of the
 decomposition.\\
Then $s=s^{-\sigma}={{n_2}^+}^{-\sigma}  w^{-1}{a}^{-\sigma}
 {n_1}^{-\sigma}$.\\
Thus we have ${a w} = (a w )^{-\sigma}$, i.e. $w^2=1$ and $a^w=
 a^{-\sigma}$.\\
Now we write ${n_1}^{-\sigma}= u^{-}u^{+}$ with $u^{-} \in {U_w}'$ and
 $u^{+} \in {U_w}$, comparing $s$ and $s^{-\sigma}$, $u^{+}$ must be
 equal to ${n_2}^{+}$.\\
Hence $s= n_1 a  w {u^{-}}^{-1}{n_1}^{-\sigma} $, thus we suppose $s= a
  w n$, with $n$ in $U_w '$.\\
From $s=s^{-\sigma}$, one has the relation $ a  w n (aw)^{-1}=
 n^{-\sigma}$, applying $\sigma$ on each side, this becomes $( aw)^{-1}
 n^{\sigma} a  w = n^{-1}$.\\
But $\theta: u \mapsto ( aw)^{-1} u^{\sigma} a  w $ is an involutive
 automorphism of $U_w '$, hence from Lemma \ref{nilp2}, there is $u'$ in
 $U_w '$ such that $n = \theta (u^{-1}) u$.\\
This gives $ s= u^{-\sigma} a  w u$, so that we suppose $s= aw$. Again
 $w a ^\sigma w = a^{-1}$, and applying Lemma \ref{nilp1} to $\theta ' :
 x \mapsto w x ^\sigma w$, we deduce that $a$ is of the form $ y \theta
 '(y^{-1})$, and $s=y w y^{-\sigma}$. \end{proof}

Let $u$ be the element $\left(\begin{array}{cccc}
1 & -\delta \\
1 &  \delta 
 \end{array}\right )$ of $M_2(K)$; one has $S_2(u)= \left (\begin{array}{cccc}
0 & 1 \\
1 &  0 
 \end{array}\right )$ (cf. Proposition \ref{cohom}).\\
We notice for further use (cf. proof of Proposition 3.1), that if we note $\tilde{T}$ the subgroup $\left\lbrace\left (\begin{array}{cc}
z & 0 \\
0 & {z}^{\sigma}\\
 \end{array}\right )\in G_2(K)| z \in K^*\right\rbrace $,
 then $u^{-1}\tilde{T}u= T = \left\lbrace \!\!\left(\begin{array}{cc}
x & \Delta y \\
y & x\\
 \end{array}\right )\!\!\!\in \!\!G_2(F)| x,y \in F \right\rbrace $.\\
\\

For $r \leq n/2$, one notes $U_r$ the $n\times n$ matrix given by the
 following block decomposition: $\left (\begin{array}{ccccc}
u &  & & & \\
 & \ddots & & &\\
 &        & u &  \\
 &        &   & I_{n-2r} \\ 
 \end{array}\right )$

If $w$ is an element of $\mathfrak{S}_n$ naturally injected in
 $G_n(K)$, one notes $U_r ^{w}= w^{-1} U_r w$.

\begin{cor}\label{2bclasses}

The elements $U_r ^{w}$ for $ 0 \leq r \leq n/2$, and $w \in \mathfrak{S}_n$ give
 a complete set of representatives of classes of $  B_n(K)\backslash
 G_n(K)/G_n(F)$.

\end{cor}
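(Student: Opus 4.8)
The strategy is to combine the homeomorphism $S_n\colon G_n(K)/G_n(F)\xrightarrow{\sim} S$ of Proposition \ref{cohom} with the orbit description of Proposition \ref{order2}, and then to transport everything back through the matrix $u$ (resp.\ the $U_r$) that realizes the standard involutions $w$ of order $\le 2$ as elements of $S$. Concretely, $B_n(K)$ acts on $G_n(K)/G_n(F)$ by left translation, and via $S_n$ this corresponds to the twisted action $b\cdot s = b^{\sigma} s\, b^{-1}$ of $B_n(K)$ on $S$ (since $S_n(bg) = b^{\sigma}g^{\sigma}g^{-1}b^{-1}$). Hence the double cosets $B_n(K)\backslash G_n(K)/G_n(F)$ are in bijection with the $B_n(K)$-orbits on $S$ under this twisted action. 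Proposition \ref{order2} says each such orbit contains exactly one element $w\in\mathfrak{S}_n$ with $w^2 = 1$, so it remains to check that the elements of $S$ arising from these $w$'s are precisely the $S_n$-images of the cosets $U_r^{w}G_n(F)$.

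First I would make the computation for a single transposition explicit: by the remark preceding the corollary, $S_2(u) = \bigl(\begin{smallmatrix}0&1\\1&0\end{smallmatrix}\bigr)$, which is the order-$2$ permutation matrix for $(1,2)$. Taking block sums gives $S_n(U_r) = $ the permutation matrix $w_0 = (1,2)(3,4)\cdots(2r-1,2r)$, an element of $\mathfrak{S}_n$ of order $\le 2$. For a general $w\in\mathfrak{S}_n$ we have $S_n(U_r^{w}) = S_n(w^{-1}U_r w) = w^{-1}U_r^{\sigma} w\, w^{-1} U_r^{-1} w = w^{-1} S_n(U_r)^{?}\dots$; more simply, since $w$ is a permutation matrix it is $\sigma$-fixed and rational over $F$, so $S_n(w^{-1}U_r w) = w^{-1}S_n(U_r)w = w^{-1}w_0 w$, again a permutation of order $\le 2$ — and as $w$ ranges over $\mathfrak{S}_n$ and $r$ over $0\le r\le n/2$, the elements $w^{-1}w_0 w$ range over all of $\mathfrak{S}_n\cap S$, i.e.\ all permutation matrices of order $1$ or $2$ (every involution in $S_n$ is conjugate to some $(1,2)\cdots(2r-1,2r)$). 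So the cosets $U_r^{w}G_n(F)$ map under $S_n$ exactly onto the set of order-$\le 2$ elements of $\mathfrak{S}_n$ sitting inside $S$.

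Putting these together: Proposition \ref{order2} gives that $\{w\in\mathfrak{S}_n : w^2 = 1\}$ is a complete, irredundant set of representatives for the twisted $B_n(K)$-orbits on $S$; the previous paragraph identifies each such $w$ as $S_n(U_r^{w'})$ for a suitable $r$ and $w'$; and $S_n$ being a $B_n(K)$-equivariant homeomorphism $G_n(K)/G_n(F)\to S$ then transports this to the statement that $\{U_r^{w} : 0\le r\le n/2,\ w\in\mathfrak{S}_n\}$ is a complete set of representatives for $B_n(K)\backslash G_n(K)/G_n(F)$. The one point requiring a little care — and the place I expect the only real friction — is the bookkeeping of \emph{redundancy}: distinct pairs $(r,w)$ can give the same $w^{-1}w_0 w$, so the family $\{U_r^{w}\}$ is a complete set of representatives but not an irredundant one; I would state this honestly (the corollary only claims "a complete set of representatives"), and note that the genuine parametrization of the double cosets is by the involutions in $\mathfrak{S}_n$, equivalently by the integer $r$ together with a choice of $r$ disjoint transpositions. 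Everything else is the routine verification that the action of $B_n(K)$ on $G_n(K)/G_n(F)$ translates under $S_n$ into the twisted action used in Proposition \ref{order2}, which is immediate from the formula for $S_n$.
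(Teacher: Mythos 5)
Your proof is correct, and it is the argument the paper leaves implicit: the paper states no proof for this corollary, clearly intending the reader to assemble it exactly as you do from Proposition \ref{cohom} (the homeomorphism $S_n$), Proposition \ref{order2} (each twisted $B_n(K)$-orbit on $S$ meets $\mathfrak{S}_n$ in a unique involution), and the explicit computation $S_2(u)=\bigl(\begin{smallmatrix}0&1\\1&0\end{smallmatrix}\bigr)$ recorded just before the corollary, which by taking block sums and conjugating by the ($F$-rational, $\sigma$-fixed) permutation matrix $w$ gives $S_n(U_r^{w})=w^{-1}w_0w$ with $w_0=(1,2)\cdots(2r-1,2r)$. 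Two small points are worth tightening if you write this up: first, spell out that the left-translation action of $B_n(K)$ on $G_n(K)/G_n(F)$ transports under $S_n$ to $b\cdot s=b^{\sigma}sb^{-1}$ and that this has the same orbits as the twisted conjugation $b\cdot s = bsb^{-\sigma}$ appearing in the proof of Proposition \ref{order2} (the substitution $b\mapsto b^{\sigma}$ identifies them, since $B_n$ is $\sigma$-stable); second, you can delete the abortive first attempt at computing $S_n(w^{-1}U_rw)$ — the "more simply" line is the clean calculation and the earlier fragment only adds noise. Your remark on redundancy is also correct and honest: distinct pairs $(r,w)$ give the same double coset whenever $w^{-1}w_0w$ coincides, so the family is complete but not irredundant, which is all the corollary claims.
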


Let $G_n= \coprod _{w \in \mathfrak{S} _n} B_n w B_n$ be the Bruhat
 decomposition of $G_n$. We call a double-class $BwB$ a Bruhat cell.  

\begin{LM}
 
One can order the Bruhat cells $C_1$, $C_2$,..., $C_{n!}$ so that for
 every $1 \leq i \leq n!$, the cell $C_i$ is closed in $G_n - \coprod
 _{k=1} ^{i-1} C_i$.

\end{LM}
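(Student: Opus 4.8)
The plan is to use the Bruhat order on the Weyl group $\mathfrak{S}_n$. Recall that the closure of a Bruhat cell $C_w = B_n w B_n$ is the union of the cells $C_{w'}$ with $w' \leq w$ in the Bruhat (strong) order; in particular $\overline{C_w} = \bigcup_{w' \leq w} C_{w'}$, so $C_w$ is open in its own closure, and the cells of length strictly greater than $\ell(w)$ never appear in $\overline{C_w}$. The strategy is therefore to \emph{peel off} cells in order of decreasing length.

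First I would fix any linear extension of the Bruhat order that refines the ordering by Coxeter length in the following sense: list the elements of $\mathfrak{S}_n$ as $w_1, w_2, \dots, w_{n!}$ so that $\ell(w_1) \geq \ell(w_2) \geq \cdots \geq \ell(w_{n!})$, and then set $C_i = B_n w_i B_n$. (Concretely one may just sort the elements by length, breaking ties arbitrarily; the long element $w_0$ comes first, the identity last.) Then I would prove by induction on $i$ that $C_i$ is closed in the locally closed subset $G_n \setminus \coprod_{k=1}^{i-1} C_k$. For the inductive step: the set $Y_i := G_n \setminus \coprod_{k=1}^{i-1} C_k$ is exactly the union of all cells $C_j$ with $j \geq i$, i.e. of all $B_n w B_n$ with $\ell(w) \leq \ell(w_i)$ together with, possibly, some cells of length exactly $\ell(w_i)$ indexed after $i$. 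Now take the closure of $C_i$ inside $G_n$: by the Bruhat-closure fact it meets only cells $C_{w'}$ with $w' \leq w_i$, and any such $w' \neq w_i$ satisfies $\ell(w') < \ell(w_i)$, hence $w'$ occurs \emph{after} $w_i$ in our list and so $C_{w'} \subseteq Y_i$. Thus $\overline{C_i} \cap G_n \subseteq Y_i$ already, which forces $\overline{C_i} \cap Y_i = \overline{C_i} \cap G_n$ to be closed in $Y_i$; but that intersection is just the closure of $C_i$ relative to $Y_i$, so $C_i$ is closed in $Y_i$, as required.

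I would state the one geometric input explicitly as a cited fact rather than reprove it: the orbit closure relation $\overline{B_n w B_n} = \bigcup_{w' \leq w} B_n w' B_n$ for the Bruhat order, which holds over any field (it is classical, e.g. Borel's \emph{Linear Algebraic Groups} or Springer's book). The only genuinely delicate point is the compatibility of the total order with the length stratification — one must make sure the tie-breaking among equal-length elements does not matter, and indeed it does not, because cells strictly below $w_i$ have strictly smaller length, so they are never among $C_1, \dots, C_{i-1}$ regardless of how ties were resolved. That is the step I would be most careful about in the write-up; everything else is a formal unwinding of the definition of the subspace topology on the locally closed set $Y_i$.

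If one prefers to avoid even invoking the full closure formula, an alternative is to use only the weaker (and more elementary) statement that $\overline{B_n w B_n}$ is contained in the union of cells of length $\leq \ell(w)$, which already suffices for the argument above: all we ever need is that no higher-length cell, and no equal-length cell other than $C_i$ itself, lies in $\overline{C_i}$. I would present the proof in the main line with the sharp closure formula since it is standard, and remark that the coarse length bound is all that is used. This lemma will then be applied cell-by-cell to analyze $ind_{G_n(F)}^{G_n(K)}$-type filtrations via the stratification of $B_n(K) \backslash G_n(K) / G_n(F)$ coming from Corollary \ref{2bclasses}.
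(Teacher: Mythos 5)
Your ordering is backwards, and the final step of your argument is a non-sequitur that masks the problem. You order the cells by \emph{decreasing} length, with the big cell first and the identity cell last. But the closure relation $\overline{B_n w B_n}=\bigcup_{w'\leq w} B_n w' B_n$ says precisely that the \emph{small}-length cells lie in the closures of the big ones: $B_n$ (the identity cell) is the closed cell, and the big cell $B_n w_0 B_n$ is the open dense one. With your ordering $C_1 = B_n w_0 B_n$ and $Y_1 = G_n$, so the very first claim "$C_1$ is closed in $Y_1$'' already fails for $n\geq 2$.

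The logical slip occurs at the end: you correctly show that $\overline{C_i}\subseteq Y_i$ and hence $\overline{C_i}\cap Y_i = \overline{C_i}$, and you observe this set is closed in $Y_i$. That last observation is vacuous (the closure of any set in a subspace is always closed in that subspace) and does \emph{not} imply $C_i$ is closed in $Y_i$. What you need is $\overline{C_i}\cap Y_i \subseteq C_i$, and with your ordering the opposite holds: $\overline{C_i}\cap Y_i = \overline{C_i}$ strictly contains $C_i$ whenever $w_i\neq e$. In fact your ordering shows $C_i$ is \emph{open}, not closed, in $Y_i$ (the complement of $C_i$ in $Y_i$ is the union of $\overline{C_j}$ for $j>i$, and none of these closures meet $C_i$).

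The fix is simply to reverse the order: list $\mathfrak{S}_n$ with \emph{increasing} length, $\ell(w_1)\leq \ell(w_2)\leq\cdots$, so that $C_1 = B_n$. Then all cells $C_{w'}$ with $w'<w_i$ have strictly smaller length, hence index $<i$, hence have already been removed from $Y_i$; this gives $\overline{C_i}\cap Y_i = C_i$, which is the correct form of the argument. This is exactly what the paper does: it takes $C_1 = B_n$, then a cell of minimal length in the complement, citing Springer 8.5.5 for the closure fact, and iterates. Your remark that only the coarse length bound $\overline{B_nwB_n}\subseteq\bigcup_{\ell(w')\leq\ell(w)}B_nw'B_n$ is needed is correct and carries over verbatim to the corrected ordering.
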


\begin{proof}
Choose $C_1 = B_n$. It is closed in $G_n$.
Now let $w_2$ be an element of $\mathfrak{S}_n -{Id}$, with minimal length. Then
 from 8.5.5. of \cite{Sp}, one has that the Bruhat cell $B {w_2} B$ is
 closed in $G_n - {B_n}$ with respect to the Zariski topology, hence for the
 p-adic topology, we call it $C_2$. We conclude by repeating this
 process.\end{proof} 

\begin{cor}
\label{closedclasses}
 
One can order the classes $A_1$,..., $A_t$ of $B_n(K)\backslash
 G_n(K)/G_n(F)$, so that $A_i$ is closed in $G_n(K) - \coprod _{k=1} ^{i-1}
 A_i$.

\end{cor}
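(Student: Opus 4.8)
The plan is to push the closure ordering of the Bruhat cells of $G_n(K)$, furnished by the preceding Lemma, forward onto the double cosets of $B_n(K)\backslash G_n(K)/G_n(F)$ through the homeomorphism $G_n(K)/G_n(F) \simeq S$ of Proposition \ref{cohom}.

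First I would set up the dictionary between double cosets and Bruhat cells. The map $S_n\colon g \mapsto g^{\sigma}g^{-1}$ is continuous, its fibres are exactly the cosets $gG_n(F)$, and by Proposition \ref{cohom} it induces a homeomorphism $G_n(K)/G_n(F) \simeq S$; being the composite of the open quotient map $G_n(K) \to G_n(K)/G_n(F)$ with this homeomorphism, $S_n$ is a continuous open surjection onto $S$, hence a quotient map, so $S_n^{-1}(\overline{Z}) = \overline{S_n^{-1}(Z)}$ for every $Z \subseteq S$ (closures in $S$ and in $G_n(K)$ of a subset of $S$ coincide, since $S=\{M \in G_n(K) : MM^{\sigma}=1\}$ is closed in $G_n(K)$, being the preimage of $1$ under the continuous map $M \mapsto MM^{\sigma}$). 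From $S_n(bg)=b^{\sigma}S_n(g)b^{-1}$ one sees that $S_n$ carries left translation by $B_n(K)$ to the twisted action $s \mapsto b^{\sigma}sb^{-1}$ of $B_n(K)$ on $S$, so the double cosets of $B_n(K)\backslash G_n(K)/G_n(F)$ are precisely the $S_n$-preimages of the orbits of this twisted action. Now for $b \in B_n(K)$ one has $b^{\sigma}(B_n(K)wB_n(K))b^{-1} \subseteq B_n(K)wB_n(K)$, so each twisted orbit lies inside a single Bruhat cell; conversely the proof of Proposition \ref{order2} shows that every $s\in S$ whose Bruhat cell is $C_w := B_n(K)wB_n(K)$ lies in the twisted orbit of $w$, so that $S \cap C_w$, whenever non-empty, is exactly one twisted orbit. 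Hence the double cosets are precisely the non-empty sets $A_w := S_n^{-1}(S \cap C_w)$, $w \in \mathfrak{S}_n$ (this also follows from Corollary \ref{2bclasses}).

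Next I would order them. Order the Bruhat cells $C_{w_1}, \dots, C_{w_{n!}}$ of $G_n(K)$ as in the preceding Lemma, so that $\overline{C_{w_i}} \subseteq \coprod_{k \le i} C_{w_k}$ for every $i$, form the list $A_{w_1}, \dots, A_{w_{n!}}$, delete the empty terms and renumber the survivors $A_1, \dots, A_t$. Using the two topological facts recorded above together with $\overline{C_{w_i}} \subseteq \coprod_{k \le i} C_{w_k}$ one gets
\[
\overline{A_{w_i}} \;=\; S_n^{-1}\big(\overline{S \cap C_{w_i}}\big) \;\subseteq\; S_n^{-1}\big(S \cap \overline{C_{w_i}}\big) \;\subseteq\; S_n^{-1}\Big(\textstyle\coprod_{k \le i}(S \cap C_{w_k})\Big) \;=\; \coprod_{k \le i} A_{w_k}.
\]
Deleting the empty terms does not affect this inclusion, so $\overline{A_i} \subseteq \coprod_{k \le i} A_k$, i.e. $A_i$ is closed in $G_n(K) - \coprod_{k < i} A_k$, which is the assertion.

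The genuinely substantive ingredients are the two topological facts used in the last display — that $S$ is closed in $G_n(K)$, and that taking $S_n$-preimages commutes with closure because $S_n$ is a continuous open surjection — and the structural input, extracted from the proof of Proposition \ref{order2}, that each non-empty slice $S\cap C_w$ is a single $B_n(K)$-orbit; this last point is what makes the double cosets correspond to Bruhat cells one-to-one rather than several-to-one, and is essential for the ordering to descend. Everything else is bookkeeping.
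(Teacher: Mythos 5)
Your proof is correct and follows essentially the same route as the paper: you transport the closure ordering of Bruhat cells through the homeomorphism $G_n(K)/G_n(F)\simeq S$, using the observation (extracted, as in the paper, from the proof of Proposition~\ref{order2}) that each non-empty slice $S\cap C_w$ is a single twisted $B_n(K)$-orbit and hence a single double coset. You merely make explicit the topological bookkeeping (openness of $S_n$, commuting preimages with closure, $S$ closed in $G_n(K)$) that the paper leaves implicit in its phrase ``the conclusion follows the preceding Lemma.''
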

\begin{proof} From the proof of Proposition \ref{order2}, we know that if $C$ is a Bruhat cell of $G_n$, then $S_n\cap C$ is either empty, or it corresponds through the homemorphism $S_n$ to a class $A$ of $B_n(K)\backslash
 G_n(K)/G_n(F)$. The conclusion follows the preceeding Lemma.
 \end{proof}

\begin{cor}
\label{localclosed}
 
Each $A_i$ is locally closed in $G_n(K)$ for the Zariski topology.

\end{cor}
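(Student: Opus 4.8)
The plan is to deduce this purely formally from Corollary \ref{closedclasses}, working throughout with the Zariski topology on $G_n(K)$. Recall that a subset of a topological space is locally closed precisely when it is open in its closure, equivalently when it is the intersection of an open set with a closed set. So the main reduction is the observation that it suffices to show that for each $i$ the initial union $F_{i-1}:=\coprod_{k=1}^{i-1}A_k$ is Zariski-closed in $G_n(K)$: granting this, $U_i:=G_n(K)-F_{i-1}$ is open, it contains $A_i$, and Corollary \ref{closedclasses} says precisely that $A_i$ is closed in $U_i$; hence $A_i=U_i\cap Z$ for some closed $Z\subseteq G_n(K)$, which exhibits $A_i$ as locally closed.

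It remains to prove, by induction on $j$, that each $F_j=\coprod_{k=1}^{j}A_k$ is closed in $G_n(K)$. The base case $F_0=\emptyset$ is immediate. For the inductive step, suppose $F_{j-1}$ is closed, so $U_j=G_n(K)-F_{j-1}$ is open; by Corollary \ref{closedclasses}, $A_j$ is closed in $U_j$, which means $A_j=U_j\cap\overline{A_j}$, with the closure taken in $G_n(K)$. Consequently $\overline{A_j}-A_j$ is contained in $G_n(K)-U_j=F_{j-1}$, so $\overline{A_j}\subseteq A_j\cup F_{j-1}=F_j$. Since the closure of a finite union is the union of the closures, $\overline{F_j}=\overline{A_j}\cup\overline{F_{j-1}}=\overline{A_j}\cup F_{j-1}\subseteq F_j$, so $F_j$ is closed, completing the induction.

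I do not expect any genuine obstacle: the statement is a formal consequence of the ordering produced in Corollary \ref{closedclasses}. The one point to be careful about is to run the induction on the cumulative statement ``$F_j$ is closed'' rather than trying to prove each $A_i$ locally closed in isolation — knowing that a single $A_i$ is locally closed does not by itself give that $F_i$ is closed, so the cumulative formulation is what makes the bookkeeping go through. One should also simply note that Corollary \ref{closedclasses} is being invoked here for the Zariski topology, since that is the topology in which local closedness is asserted (and its proof, via 8.5.5 of \cite{Sp}, does yield the Zariski statement).
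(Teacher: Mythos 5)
Your argument is correct, and since the paper states this corollary without any written proof, the formal induction you give is exactly the sort of bookkeeping the author leaves implicit. Showing by induction that each initial union $F_j=\coprod_{k\le j}A_k$ is closed, and then reading off $A_i=(G_n(K)-F_{i-1})\cap\overline{A_i}$, is a clean and complete deduction from Corollary \ref{closedclasses}. You are also right that the cumulative statement is what makes the induction work; proving local closedness of a single $A_i$ in isolation gives no handle on $F_i$.

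The one place I would slow down is the parenthetical justification that Corollary \ref{closedclasses} ``does yield the Zariski statement.'' The Lemma on Bruhat cells is indeed a Zariski statement (8.5.5 of \cite{Sp}), but the passage from Bruhat cells to the classes $A_i$ in the proof of Corollary \ref{closedclasses} goes through the map $S_n\colon g\mapsto g^{\sigma}g^{-1}$, and Proposition \ref{cohom} is stated as a homeomorphism for the $p$-adic topology. What saves the day is that $S_n$ is a morphism of $F$-varieties (after Weil restriction of $GL_n$ from $K$ to $F$, since $\sigma$ and $g\mapsto g^{-1}$ are both $F$-algebraic), hence Zariski-continuous, and the set $S$ is Zariski-closed over $F$; so $A_i=S_n^{-1}(S\cap C_i)$ is closed in $S_n^{-1}(S-\coprod_{k<i}C_k\cap S)=G_n(K)-\coprod_{k<i}A_k$ for the Zariski topology too. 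In fact, this observation gives an even more direct proof of the corollary that bypasses the induction entirely: each Bruhat cell $C$ is Zariski-locally-closed, $S$ is Zariski-closed, and $A_i=S_n^{-1}(S\cap C)$ is the preimage of a locally closed set under a Zariski-continuous map, hence locally closed. Either route is fine; yours has the virtue of being purely formal given Corollary \ref{closedclasses}, but it would be worth making the Zariski-continuity of $S_n$ explicit rather than relegating it to a parenthesis.
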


We will also need the following Lemma:

\begin{LM}
 
Let $G$, $H$,$X$, and $(\rho, V_{\rho})$ be as in the beginning of the section, the map
 $\Phi$ from $D(X)\otimes V_{\rho}$ to $D(H \backslash X, \rho, V_{\rho})$ defined by $\Phi: f\otimes v 
 \mapsto (x \mapsto \int _H f(hx) {\rho} (h^{-1}) v dh)$ is surjective.

\end{LM}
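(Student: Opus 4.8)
The plan is to prove surjectivity by localising over the quotient space $H\backslash X$. Write $p\colon X\to H\backslash X$ for the orbit map. I would first record two easy facts. \emph{(i)} If $\beta\in D(X)$ is left $H$-invariant — equivalently $\beta=\alpha\circ p$ for some $\alpha\in D(H\backslash X)$ — then $\beta\cdot\Phi(f\otimes v)=\Phi((\beta f)\otimes v)$, because $\beta(x)=\beta(hx)$ may be pulled inside the integral defining $\Phi$; hence $\mathrm{Im}\,\Phi$ is stable under multiplication by pull-backs of elements of $D(H\backslash X)$. \emph{(ii)} Since $H$ is closed in $G$ and $HX=X$, the set $X$ is saturated for the open map $G\to H\backslash G$, so $p$ is an open quotient map and $H\backslash X$ is homeomorphic to the subspace $p(X)$ of the Hausdorff, locally compact, totally disconnected space $H\backslash G$; in particular $p$ carries compact open subsets of $X$ onto clopen subsets of $H\backslash X$, and any finite cover of a compact subset of $H\backslash X$ by clopen sets may be refined to a finite clopen partition.

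Let $F\in D(H\backslash X,\rho,V_\rho)$; by hypothesis $p(\operatorname{supp}F)$ is compact. The heart of the argument is the following local claim: for every $x_0\in\operatorname{supp}F$ there exist a compact open neighbourhood $\Omega_{x_0}\ni x_0$ in $X$, a vector $v_{x_0}\in V_\rho$ and a function $g_{x_0}\in D(X)$ with $\Phi(g_{x_0}\otimes v_{x_0})=(\mathbf 1_{p(\Omega_{x_0})}\circ p)\,F$. To produce the data, put $v_{x_0}:=F(x_0)$. Smoothness of $F$ yields a neighbourhood $W$ of $x_0$ in $X$ on which $F\equiv v_{x_0}$; smoothness of $\rho$ yields an open $W_0\ni e$ in $G$ with $H\cap W_0\subseteq\operatorname{Stab}_\rho(v_{x_0})$; and total disconnectedness of $G$ yields a compact open subgroup $J\le G$ with $x_0Jx_0^{-1}\subseteq W_0$. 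Set $H_{x_0}:=H\cap x_0Jx_0^{-1}$, a compact open subgroup of $H$ lying in $\operatorname{Stab}_\rho(v_{x_0})$; choose a compact open neighbourhood $\Omega'$ of $x_0$ in $X$ with $\Omega'\subseteq(x_0J)\cap W$; and put $\Omega_{x_0}:=H_{x_0}\cdot\Omega'$. Then $\Omega_{x_0}$ is compact open in $X$, $H_{x_0}$-stable, contained in $x_0J$, and $F$ is constant equal to $v_{x_0}$ on it (using $H_{x_0}\subseteq\operatorname{Stab}_\rho(v_{x_0})$).

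The structural point is that $H$ acts on $X\subseteq G$ by left translation inside the group $G$. Since $\Omega_{x_0}\subseteq x_0J$ with $J$ a subgroup, and $\Omega_{x_0}$ is $H_{x_0}$-stable, one gets, for every $\omega\in\Omega_{x_0}$, the exact identity
\[
\{\,h\in H:\ h\omega\in\Omega_{x_0}\,\}=H\cap x_0Jx_0^{-1}=H_{x_0}
\]
(the inclusion $\supseteq$ is $H_{x_0}$-stability, and $\subseteq$ follows from $h=(h\omega)\omega^{-1}\in(x_0J)(x_0J)^{-1}=x_0Jx_0^{-1}$). Now take $g_{x_0}:=\operatorname{vol}_H(H_{x_0})^{-1}\,\mathbf 1_{\Omega_{x_0}}$ and unwind $\Phi(g_{x_0}\otimes v_{x_0})(x)$: for $x\notin H\Omega_{x_0}$ the integrand is identically zero; for $x=h_0\omega_0\in H\Omega_{x_0}$, the displayed identity together with $H_{x_0}\subseteq\operatorname{Stab}_\rho(v_{x_0})$ and right-invariance of the Haar measure $dh$ make the integral over $H$ collapse to $\rho(h_0)v_{x_0}=F(x)$. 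This is precisely $\Phi(g_{x_0}\otimes v_{x_0})=(\mathbf 1_{p(\Omega_{x_0})}\circ p)\,F$, so the local claim holds.

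To conclude, the sets $p(\Omega_{x_0})$ ($x_0\in\operatorname{supp}F$) form an open cover of the compact set $p(\operatorname{supp}F)$; pick a finite subcover $p(\Omega_{x_1}),\dots,p(\Omega_{x_m})$ and, by \emph{(ii)}, a clopen partition $\bigcup_{i}p(\Omega_{x_i})=\bigsqcup_{j}U_j$ with each $U_j\subseteq p(\Omega_{x_{i(j)}})$. Then $\mathbf 1_{U_j}\in D(H\backslash X)$ and, by \emph{(i)},
\[
\mathbf 1_{U_j}\,F=\mathbf 1_{U_j}\cdot\Phi\big(g_{x_{i(j)}}\otimes v_{x_{i(j)}}\big)=\Phi\big((\mathbf 1_{U_j}\circ p)\,g_{x_{i(j)}}\otimes v_{x_{i(j)}}\big);
\]
summing over $j$ and using $\operatorname{supp}F\subseteq p^{-1}\big(\bigcup_i p(\Omega_{x_i})\big)$ gives $F=\Phi\big(\sum_j(\mathbf 1_{U_j}\circ p)\,g_{x_{i(j)}}\otimes v_{x_{i(j)}}\big)\in\mathrm{Im}\,\Phi$. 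I expect the main obstacle to be the local construction of the third paragraph: one must arrange $\Omega_{x_0}$ so that the set of $h\in H$ carrying a given point of $\Omega_{x_0}$ back into $\Omega_{x_0}$ is \emph{at once} a fixed compact open subgroup of $H$ and one fixing $v_{x_0}$ — this is where the left-translation structure (which both makes $H\cap x_0Jx_0^{-1}$ a subgroup and pins this set down exactly) and the total disconnectedness of $G$ both enter, and it is what, together with right-invariance of $dh$, makes the averaging integral telescope to the identity. The point-set topology used in \emph{(ii)} is routine once one notices that it is needed.
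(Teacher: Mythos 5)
Your proof is correct and in essence the same as the paper's: localise on a small compact open set where $F$ is constant and a compact open subgroup of $H$ fixes the value, check that the averaging integral collapses to $\rho(h_0)v_{x_0}$, and patch; you simply carry out in detail the choice of neighbourhood ($\Omega_{x_0}\subseteq x_0J$ with $J$ a compact open subgroup, so that the return set $\{h\in H: h\omega\in\Omega_{x_0}\}$ is exactly $H_{x_0}$) and the finite clopen partition that the paper merely asserts (``such functions generate''). One small slip in the write-up: fact \emph{(i)} as stated is vacuous unless $H$ is compact, since a nonzero left $H$-invariant $\beta\in D(X)$ cannot have compact support; what you in fact use, and what is true, is that multiplying $f\in D(X)$ by a locally constant left $H$-invariant function on $X$ (such as $\mathbf 1_{U_j}\circ p$) again gives an element of $D(X)$ and commutes with $\Phi$.
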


\begin{proof}
Let $v\in V_{\rho}$, $U$ an open subset of $G$ that intersects $X$,
 small enough for $h \mapsto \rho(h)v$ to be trivial on $H \cap U
 U^{-1}$.\\
Let $f'$ be the function with support in $H(X\cap U)$ defined by $hx
 \mapsto \rho(h)v$.\\
Such functions generate $D(H \backslash X,\rho ,V_{\rho})$ as a vector
 space.\\
Now let $f$ be the function of $D( X, V_{\rho})$ defined by $x \mapsto
 1_{U\cap X}(x) v$, then $\Phi(f)$ is a multiple of $f'$.\\
But for $x$ in $U\cap X$, $\Phi(f)(x) = \int _H \rho(h^{-1}) f(hx) dh $
 because $h \mapsto \rho(h)v$ is trivial on $H \cap U U^{-1}$, plus $h
 \mapsto f(hx)$ is a positive function that multiplies $v$, and
 $f(x)=V$, so $F(f)(x)$ is $v$ multiplied by a strictly positive scalar.\end{proof}

\begin{cor}\label{res}
 
Let $Y$ be a closed subset of $X$, $H$-stable, then the restriction map
 from $D(H \backslash X, \rho , V_{\rho})$ to $D(H \backslash Y, \rho,
 V_{\rho})$ is surjective.

\end{cor}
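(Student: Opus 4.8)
The plan is to deduce this from the surjectivity of the map $\Phi$ of the preceding Lemma, used both for $X$ and for $Y$. First note that since $Y$ is closed in the locally closed subspace $X$ of $G$, it is itself locally closed in $G$, and by hypothesis $H.Y\subset Y$; hence the preceding Lemma applies verbatim with $Y$ in the role of $X$, giving a surjection $\Phi_Y\colon D(Y)\otimes V_{\rho}\to D(H\backslash Y,\rho,V_{\rho})$, $f\otimes v\mapsto\bigl(y\mapsto\int_H f(hy)\rho(h^{-1})v\,dh\bigr)$. Write $\Phi_X$ for the corresponding surjection attached to $X$.

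Next, given $g\in D(H\backslash Y,\rho,V_{\rho})$, use surjectivity of $\Phi_Y$ to write $g=\Phi_Y\bigl(\sum_i\varphi_i\otimes v_i\bigr)$ with $\varphi_i\in D(Y)$ and $v_i\in V_{\rho}$. Since $Y$ is closed in the totally disconnected space $X$, the restriction map $D(X)\to D(Y)$ between spaces of smooth compactly supported scalar functions is surjective (the support of $\varphi_i$ is compact in $X$, so one extends $\varphi_i$ to a locally constant function on a compact open neighbourhood and cuts it off); choose $\psi_i\in D(X)$ with $\psi_i|_Y=\varphi_i$.

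Finally set $G=\Phi_X\bigl(\sum_i\psi_i\otimes v_i\bigr)\in D(H\backslash X,\rho,V_{\rho})$. For $y\in Y$ and $h\in H$ one has $hy\in Y$ because $Y$ is $H$-stable, whence $\psi_i(hy)=\varphi_i(hy)$, and therefore $G(y)=\sum_i\int_H\psi_i(hy)\rho(h^{-1})v_i\,dh=\sum_i\int_H\varphi_i(hy)\rho(h^{-1})v_i\,dh=g(y)$, i.e. $G|_Y=g$. This exhibits $g$ as a restriction and proves surjectivity.

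There is essentially no serious obstacle here; the only points needing a word of care are that $Y$ is again an admissible datum for the preceding Lemma (which follows from $Y$ being locally closed in $G$ and $H$-stable) and that scalar functions in $D(Y)$ extend to $D(X)$, a standard property of closed subspaces of totally disconnected spaces. The $H$-stability of $Y$ is precisely what guarantees that the restriction of $\Phi_X(\psi_i\otimes v_i)$ to $Y$ coincides with $\Phi_Y(\varphi_i\otimes v_i)$.
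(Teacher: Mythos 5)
Your proof is correct and follows essentially the same route as the paper: use the surjectivity of the map $\Phi$ from the preceding Lemma for both $X$ and $Y$, extend scalar functions from $D(Y)$ to $D(X)$, and observe that the square formed by the two $\Phi$'s and the two restriction maps commutes (your computation using $H$-stability of $Y$ is exactly the commutativity the paper invokes). Nothing to add.
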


\begin{proof}
This is a consequence of the known surjectivity of the restriction map
 from $D(X)$ to $D(Y)$, which implies the surjectivity of the
 restriction from $D(X, V_{\rho})$ to $D(Y, V_{\rho})$ and of the commutativity of
 the diagram:\\

$\begin{array}{ccc}
D(X) & \rightarrow & D(Y)\\
\downarrow \Phi &  & \downarrow \Phi\\
D(H \backslash X, \rho)& \rightarrow & D(H \backslash Y, \rho)
 \end{array}$. \end{proof}

\section{Distinguished principal series}

If $\pi$ is a smooth representation of $G_n(K)$ of space $V_{\pi}$, and $\chi$ is a character of $F^*$, we say that $\pi$ is $\chi$-distinguished if there exists on $V_{\pi}$ a nonzero linear form $L$ such that $L(\pi(g)v)=\chi(det(g))L(v)$ whenever $g$ is in $G_n(F)$ and $v$ belongs to $V_{\pi}$. If $\chi$ is trivial, we simply say that $\pi$ is distinguished.\\

We first recall the following:

\begin{thm}{(\cite{F}, Proposition 12)}
\label{autodual}
 
Let $\pi$ be a smooth irreducible distinguished representation of $G_n(K)$, then $\pi
 ^{\sigma} \simeq \check{\pi}$. 

\end{thm}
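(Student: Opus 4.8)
The strategy I would follow is the classical "distinguished implies conjugate-self-dual" argument, which goes back to Flicker and relies on integrating matrix coefficients over $G_n(F)$ against the invariant linear form. First I would fix a nonzero linear form $L$ on $V_\pi$ with $L(\pi(g)v) = L(v)$ for all $g \in G_n(F)$ (the case $\chi = 1$ of distinction, which is what the statement asks; the general $\chi$-case reduces to this by twisting $\pi$ by $\chi \circ \det$ composed with the $\sigma$-conjugate norm character, but for the statement as given we only need $\chi$ trivial). Using $L$ and a vector $v \in V_\pi$, I would form the matrix coefficient-type function $g \mapsto L(\pi(g)v)$ on $G_n(K)$ and restrict it to study how $V_\pi$ sits inside the smooth functions on $G_n(F)\backslash G_n(K)$; the key point is that $v \mapsto (g \mapsto L(\pi(g)v))$ realizes $\pi$ as a subrepresentation of $\mathrm{Ind}_{G_n(F)}^{G_n(K)} 1 = C^\infty(G_n(F)\backslash G_n(K))$.

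Next I would bring in the contragredient $\check\pi$. The space $G_n(F)\backslash G_n(K)$ carries the involution induced by $\sigma$ together with $g \mapsto {}^t g^{-1}$ (or some such anti-automorphism), and one checks that composing these gives an identification under which the right-regular representation twisted through $\sigma$ becomes dual to the right-regular representation. Concretely: if $\pi$ embeds in $C^\infty(G_n(F)\backslash G_n(K))$, then $\check\pi$ also embeds there (via the contragredient of the embedding and the fact that the space is isomorphic to its own dual-type object through $g\mapsto g^{-1}$, using that $G_n(F)$ is stable under inversion), and twisting by $\sigma$ matches $\pi^\sigma$ with $\check\pi$. The cleaner formulation: the linear form $L$ being $G_n(F)$-invariant means $v \otimes L$ defines, for each $\check v \in V_{\check\pi}$, a pairing; one produces an explicit nonzero intertwiner $\pi^\sigma \to \check\pi$ out of $L$ and shows it is an isomorphism by irreducibility (Schur).

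The main obstacle — and the reason this is cited from Flicker rather than reproved — is verifying that the intertwiner $\pi^\sigma \to \check\pi$ constructed from $L$ is actually nonzero, which requires knowing that the matrix coefficients $g \mapsto L(\pi(g)v)$ do not all vanish, i.e. that the embedding into $C^\infty(G_n(F)\backslash G_n(K))$ is nonzero; this is immediate since $L \neq 0$. Once nonvanishing is in hand, irreducibility of both $\pi^\sigma$ (clear, since $\sigma$ is an automorphism of $G_n(K)$) and $\check\pi$ forces the intertwiner to be an isomorphism via Schur's lemma. I would present this as: (1) realize $\pi \hookrightarrow C^\infty(G_n(F)\backslash G_n(K))$ via $L$; (2) observe $C^\infty(G_n(F)\backslash G_n(K))$ is preserved by the map sending a function $f$ to $\big(g \mapsto f(\sigma(g)^{-1})\big)$, and this map intertwines right translation by $g$ with right translation by $\sigma(g)^{-1}$; (3) conclude the $\sigma$-twist of $\pi$ appears in the space whose natural $G_n(K)$-action, read through the contragredient, is $\check\pi$'s, hence $\pi^\sigma$ and $\check\pi$ share an irreducible constituent and are therefore isomorphic. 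Since the paper explicitly attributes this to \cite{F}, Proposition 12, I would in fact simply cite it and move on — but the above is the proof sketch underlying that citation.
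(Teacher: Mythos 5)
The paper offers no proof of this statement; it simply cites Flicker's Proposition~12 in \cite{F}, so there is no argument in the text to compare against. As a self-contained sketch, yours has a genuine gap, concentrated in steps~(2) and~(3). The map $f \mapsto \big(g \mapsto f(\sigma(g)^{-1})\big)$ does \emph{not} preserve $C^\infty(G_n(F)\backslash G_n(K))$: for $h \in G_n(F)$ one has $f(\sigma(hg)^{-1}) = f(\sigma(g)^{-1}h^{-1})$, so the image function is \emph{right} $G_n(F)$-invariant, not left, and lands in $C^\infty(G_n(K)/G_n(F))$. What is true is that twisting by $\sigma$ alone sends $C^\infty(G_n(F)\backslash G_n(K))$ to itself (since $\sigma(G_n(F))=G_n(F)$), so $\pi^\sigma$ embeds there, and via the Gelfand--Kazhdan involution $g\mapsto {}^t g^{-1}$ (which also preserves $G_n(F)$) one sees that $\check\pi$ embeds there as well.

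But this is precisely where step~(3) collapses: by Frobenius reciprocity, $C^\infty(G_n(F)\backslash G_n(K)) = \mathrm{Ind}_{G_n(F)}^{G_n(K)} 1$ contains \emph{every} distinguished irreducible representation as a subrepresentation, so the fact that both $\pi^\sigma$ and $\check\pi$ embed into it carries no information about whether they are isomorphic to each other. The claimed ``explicit nonzero intertwiner $\pi^\sigma\to\check\pi$ built from $L$'' is never actually written down, and it cannot be produced from $L$ alone. The substance of Flicker's proposition is a Gelfand--Kazhdan-type argument: one must also use a second invariant functional $\ell\in\mathrm{Hom}_{G_n(F)}(\check\pi,1)$, form the $(G_n(F),G_n(F))$-bi-invariant distribution $f\mapsto\ell(L\circ\pi(f))$ on $G_n(K)$, and invoke the (nontrivial) theorem that all such bi-invariant distributions are invariant under a suitable anti-involution built from $\sigma$, from which the identification $\check\pi\simeq\pi^\sigma$ is extracted; alternatively one works in the Whittaker model. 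None of that is recovered by the ``both embed in the same big space'' reasoning. Citing \cite{F}, as you suggest at the end, is therefore not merely a shortcut but essentially forced.
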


Let $\chi _1, ... , \chi _n$ be $n$ characters of $K^*$, with none of
 their quotients equal to $| \ |_K$.
We note $\chi$ the character of $B_n(K)$ defined by $\chi \left
 (\begin{array}{ccc}
b_1 & \star & \star \\
 & \ddots & \star \\
 &        & b_n 
 \end{array}\right ) = \chi _1(b_1)... \chi _n (b_n)$.\\
 We note $\pi( \chi)$ the representation of $G_n(K)$
 by right translation on the space of functions $D(B_n (K)\backslash
 G_n(K), {\Delta _{B_n}}^{-1/2} \chi)$.
This representation is smooth, irreducible and called the principal
 series attached to $\chi$.\\
If $\pi$ is a smooth representation of $G_n(K)$, we note $\check{pi}$ its smooth contragredient.\\

We will need the following Lemma:

\begin{LM}(Proposition 26 in \cite{F1})
\label{induced}
Let ${\bar{m}}=(m_1,\dots,m_l)$ be a partition of a positive integer $m$,
 let $P_{\bar{m}}$ be the corresponding standard parabolic subgroup,
 and for each $1\leq i\leq l$, let $\pi _i$ be a smooth distinguished representation of $G_{m_i}(K)$,
 then $\pi _1 \times \dots \times \pi _l=
 ind_{P_{\bar{m}}(K)}^{G_m(K)}({\Delta _{P_{\bar{m}}(K)} ^{-1/2}}(\pi _1 \otimes \dots \otimes \pi
 _l)) $ is distinguished.

\end{LM}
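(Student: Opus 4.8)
The plan is to produce a distinguishing linear form on the induced representation $\pi_1\times\dots\times\pi_l$ by integrating the given distinguishing forms on the $\pi_i$ against functions in the induced space, restricted to the relevant $G_m(F)$-orbit. Concretely, each $\pi_i$ comes with a nonzero $L_i\in\mathrm{Hom}_{G_{m_i}(F)}(\pi_i,\mathbb{C})$. On the model $ind_{P_{\bar m}(K)}^{G_m(K)}(\Delta_{P_{\bar m}(K)}^{-1/2}(\pi_1\otimes\dots\otimes\pi_l))$, whose vectors are functions $f\colon G_m(K)\to V_{\pi_1}\otimes\dots\otimes V_{\pi_l}$, I want to define
\[
\Lambda(f)=\int_{P_{\bar m}(F)\backslash G_m(F)} (L_1\otimes\dots\otimes L_l)\bigl(f(g)\bigr)\,dg,
\]
and check that this converges, is $G_m(F)$-invariant, and is not identically zero.

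The key steps, in order, are as follows. First, identify the orbit structure: the subgroup $P_{\bar m}(K)G_m(F)$ is open in $G_m(K)$ (because $P_{\bar m}(K)$ is open in $G_m(K)$ after one notes $P_{\bar m}$ is a parabolic, or more carefully because $P_{\bar m}(K)\backslash G_m(K)$ is compact and $G_m(F)$ acts with an open orbit through the identity coset, $P_{\bar m}(F)$ being exactly the stabiliser since $P_{\bar m}(K)\cap G_m(F)=P_{\bar m}(F)$). This lets me restrict attention, via Corollary \ref{res}, to functions supported on the closed-open double coset $P_{\bar m}(K)G_m(F)$; on that piece the integral over $P_{\bar m}(F)\backslash G_m(F)$ is really an integral over the compact $P_{\bar m}(K)\backslash P_{\bar m}(K)G_m(F)$, so convergence is automatic. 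Second, verify the modulus bookkeeping: for $g\in G_m(F)$ and $p\in P_{\bar m}(F)$ one has $f(pg)=\Delta_{P_{\bar m}(K)}(p)^{-1/2}(\pi_1\otimes\dots\otimes\pi_l)(p)f(g)$; applying $L_1\otimes\dots\otimes L_l$ and using each $L_i\circ\pi_i(p_i)=L_i$ on $G_{m_i}(F)$, together with the fact that for $p\in P_{\bar m}(F)$ the $K$-modulus restricted to the $F$-points equals the square of the $F$-modulus, $\Delta_{P_{\bar m}(K)}(p)^{1/2}=\Delta_{P_{\bar m}(F)}(p)$, one gets exactly the $\Delta_{P_{\bar m}(F)}$-equivariance needed for the integral over $P_{\bar m}(F)\backslash G_m(F)$ to make sense and to be right-$G_m(F)$-invariant. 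Third, establish non-vanishing: choose $f$ supported in $P_{\bar m}(K)G_m(F)$ with $f$ on that coset built from a compactly supported function on $P_{\bar m}(F)\backslash G_m(F)$ times a vector $v_1\otimes\dots\otimes v_l$ with $(L_1\otimes\dots\otimes L_l)(v_1\otimes\dots\otimes v_l)\neq0$ (possible since each $L_i\neq0$), and arrange the support so that the integrand is nonnegative and not identically zero; then $\Lambda(f)\neq0$.

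I expect the main obstacle to be the topological/orbit input: showing that $P_{\bar m}(K)G_m(F)$ is both open and closed in $G_m(K)$ (or at least that it is locally closed and that $P_{\bar m}(F)$ is the full stabiliser, so that integration over $P_{\bar m}(F)\backslash G_m(F)$ against a function supported there is really a convergent integral over a compact space), and cleanly justifying the use of Corollary \ref{res} to reduce a general $f$ to this supported case while preserving non-triviality. The comparison of moduli $\Delta_{P_{\bar m}(K)}|_{P_{\bar m}(F)}=\Delta_{P_{\bar m}(F)}^2$ is a standard but essential computation (it comes down to $|x|_K=|x|_F^2$ for $x\in F^*$ acting by conjugation on root subgroups), and getting its exponents right is where sign errors would creep in. Everything else — smoothness of $\Lambda$, the invariance check, and the non-vanishing — is routine once the orbit picture and the modulus identity are in place. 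Since the reference attributes this to Proposition 26 of \cite{F1}, I would also simply cite that and present the above as the idea of the proof.
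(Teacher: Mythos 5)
The paper does not actually prove this lemma; it is stated with the parenthetical ``(Proposition 26 in \cite{F1})'' and no argument is given, so the correct thing to do in the paper's context is simply to cite Flicker, as you note at the end. Your sketch is nevertheless the standard closed-orbit period argument, which is also what Flicker's proof amounts to, and the modulus bookkeeping ($\Delta_{P_{\bar m}(K)}|_{P_{\bar m}(F)}=\Delta_{P_{\bar m}(F)}^2$ coming from $|x|_K=|x|_F^2$ for $x\in F$) and the non-vanishing step via Corollary \ref{res} are both correct.

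There is, however, a genuine error in the topological input. The double coset $P_{\bar m}(K)G_m(F)$ is \emph{not} open in $G_m(K)$, and both of the reasons you offer are wrong: a proper parabolic $P_{\bar m}(K)$ is closed, not open, in $G_m(K)$ (it has strictly smaller dimension, hence measure zero); and the $G_m(F)$-orbit through the identity coset in $P_{\bar m}(K)\backslash G_m(K)$ is likewise not open, for the same dimension reason --- the orbit $P_{\bar m}(F)\backslash G_m(F)$ has $p$-adic dimension $\dim(G_m/P_{\bar m})$ over $F$, while the ambient flag variety $P_{\bar m}(K)\backslash G_m(K)$ has dimension $2\dim(G_m/P_{\bar m})$ over $F$. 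What is true, and what you actually need, is that this orbit is \emph{closed}: $G_m(F)/P_{\bar m}(F)$ is compact (since $P_{\bar m}$ is parabolic), and by Hilbert 90 it equals $(G_m/P_{\bar m})(F)$, a compact hence closed subset of $(G_m/P_{\bar m})(K)=P_{\bar m}(K)\backslash G_m(K)$. Closedness is exactly the hypothesis of Corollary \ref{res}, which you invoke; it also gives convergence of your integral, since the domain $P_{\bar m}(F)\backslash G_m(F)$ is compact. So once you replace ``open'' by ``closed'' (and delete the two incorrect justifications), the rest of your proposal --- the equivariance check, the invariant integral, and the non-vanishing via extending a suitable function from the closed orbit --- goes through as written.
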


We now come to the principal results:

\begin{prop}
 
Let $\chi=(\chi_1,\dots,\chi_n)$ be a character of $T_n(K)$, suppose that the principal series representation $\pi(\chi)$ is distinguished, there exists a
 re-ordering of the ${\chi _i}$'s, and $r \leq n/2$, such that $ \chi
 _{i+1}^{\sigma} = {{\chi _i}} ^{-1} $ for $i=1,3,..,2r-1$, and that ${\chi _i} _{|F^*} =1$
 for $ i > 2r$.

\end{prop}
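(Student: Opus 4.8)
The plan is to use the geometric "orbit-filtration" method that the preliminary section has carefully prepared. Since $\pi(\chi)$ is the right-translation representation on $D(B_n(K)\backslash G_n(K),\Delta_{B_n}^{-1/2}\chi)$ and it is distinguished, there is a nonzero $G_n(F)$-invariant linear form $L$ on this space. By Corollary \ref{2bclasses} and Corollaries \ref{closedclasses}, \ref{localclosed}, the double-coset space $B_n(K)\backslash G_n(K)/G_n(F)$ is finite, with representatives $U_r^w$, and the classes $A_1,\dots,A_t$ can be ordered so that each $A_i$ is closed in the complement of the previous ones (hence locally closed). This gives a $G_n(F)$-stable filtration of the space of functions by supports, and by Corollary \ref{res} the restriction maps between the corresponding function spaces are surjective. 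A standard argument (Bernstein's localisation principle / the theory of distributions supported on orbits) then shows that $L$ must be nontrivial on the subspace of functions supported on the closure of \emph{some} single orbit $A = B_n(K)\,x\,G_n(F)$, and in fact, after peeling off the closed pieces, it induces a nonzero $G_n(F)$-invariant functional on $D(B_n(K)\backslash A,\Delta_{B_n}^{-1/2}\chi)\cong \operatorname{ind}$ of a character from the stabiliser.

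Next I would compute, for the representative $x = U_r^w$, the stabiliser $H_x = x^{-1}B_n(K)x \cap G_n(F)$ and the character by which $B_n(K)$ acts on the fibre, and apply Frobenius reciprocity (Mackey theory for the open-orbit case, and the general distribution argument in the non-open case) to translate "there is a nonzero $G_n(F)$-invariant functional on the functions supported over $A$" into the condition that the character $\Delta_{B_n}^{-1/2}\chi$, restricted to $x H_x x^{-1} \subset B_n(K)$, agrees with the modulus factor $(\Delta_{B_n}/\Delta_{H_x})^{1/2}$ or more precisely is trivial on $xH_xx^{-1}$ after the appropriate twist. Conjugating by $w$ reduces everything to the case $x = U_r$, where by the explicit computation recalled just before Corollary \ref{2bclasses} — namely $u^{-1}\tilde T u = T$ with $\tilde T = \{\mathrm{diag}(z,z^\sigma)\}$ — one sees block by block that the torus part of the stabiliser forces, on the first $2r$ coordinates (paired up by the blocks $u$), the relation $\chi_{2i}^\sigma = \chi_{2i-1}^{-1}$, and on the last $n-2r$ coordinates (the identity block $I_{n-2r}$, whose stabiliser in $G_1(F)=F^*$ is all of $F^*$), the relation $\chi_i|_{F^*} = 1$. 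Re-indexing/relabelling the $\chi_i$'s according to the permutation $w$ gives exactly the claimed re-ordering with $\chi_{i+1}^\sigma = \chi_i^{-1}$ for $i = 1,3,\dots,2r-1$ and $\chi_i|_{F^*}=1$ for $i>2r$.

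The main obstacle I anticipate is making the passage from "$L$ is distinguished on the whole induced space" to "$L$ is supported on (the closure of) a single orbit and descends to a functional there" fully rigorous in the \emph{non-open} orbit cases: one has to control the transverse derivatives of distributions along the boundary strata, i.e. rule out contributions of the form "functional composed with differentiation normal to a lower-dimensional orbit", which in the $p$-adic setting means checking that the relevant normal characters of the stabiliser do not coincide with the twist. In Hironaka/Jacquet-style arguments this is handled by a positivity or a $T$-action weight argument; here the cleanest route is probably to note that it suffices to exhibit \emph{one} orbit contributing, read off the necessary condition on $\chi$ from that orbit's stabiliser, and observe that all orbit stabilisers yield conditions of the stated shape (with possibly different $r$ and different pairing), so that whichever orbit carries $L$, the conclusion holds. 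I would also need the non-vanishing input that the character is "generic" — none of the $\chi_i/\chi_j$ equal $|\ |_K$ — which is exactly the hypothesis guaranteeing $\pi(\chi)$ is irreducible, so that $L$ is genuinely a functional on $\pi(\chi)$ and the support analysis is not obstructed by reducibility.
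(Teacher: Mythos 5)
Your plan begins the same way the paper does: filter $D(B_n(K)\backslash G_n(K),\Delta_{B_n}^{-1/2}\chi)$ by the orbits $A_i$ using Corollaries \ref{2bclasses}, \ref{closedclasses} and \ref{res}, conclude that a nonzero $G_n(F)$-invariant functional lives on some $D(B_n(K)\backslash A_i,\Delta_{B_n}^{-1/2}\chi)$, and then invoke the existence criterion for an invariant functional on such a compactly-induced space (the paper cites \cite{BH}, ch.1, prop.3.4). Up to that point the two arguments are identical.

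The gap is in what you then claim that criterion yields. The criterion says that a certain character of the stabiliser must equal a ratio of modulus characters; since moduli are real and positive, the honest conclusion (and the one the paper draws) is only that $\chi$ is \emph{positive} (real positive-valued) on the torus $\bar{T_r}^{\,w}\subset B_n(K)\cap U_r^{w}G_n(F)U_r^{-w}$ --- not that $\chi_{2i}^{\sigma}=\chi_{2i-1}^{-1}$ and $\chi_i|_{F^*}=1$ hold outright. You assert directly that the stabiliser computation "forces" these equalities, but you never compute or cancel the modulus factors, so the claim is unsupported. In fact, after obtaining positivity the paper explicitly notes this only finishes the problem when $\chi$ is \emph{unitary}; for general $\chi$ a second ingredient is needed, and it is precisely the one missing from your write-up: Theorem \ref{autodual} (Flicker's result that distinction forces $\pi^{\sigma}\simeq\check{\pi}$). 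That theorem pins down the structure of the tuple $(\chi_1,\dots,\chi_n)$ --- a reordering into $p$ genuine $\sigma$-dual pairs, $q$ characters trivial on $F^*$, and $s$ characters restricting to $\eta_{K/F}$ on $F^*$ --- and the positivity constraint on $\bar{T_r}^{\,w}$ is then used to rule out $s>0$ (since $\eta_{K/F}$ takes the value $-1$ and so cannot be part of a positive character). Your proposal never mentions $\pi^{\sigma}\simeq\check{\pi}$ and does not explain how positivity alone excludes $\chi_i|_{F^*}=\eta_{K/F}$, which is exactly the case the paper's second half is built to handle. Either you must carry out the explicit modulus computation on $U_r^{-w}B_n(K)U_r^{w}\cap G_n(F)$ and show it cancels (which you have not done and which is not obviously true for all $w$), or you need to import Theorem \ref{autodual} as the paper does. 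As written, the step "one sees block by block that ... forces the relation" is the missing heart of the argument.

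Two smaller remarks. First, the worry you raise about "transverse derivatives/normal characters" on lower strata is not actually an issue here: in the $p$-adic setting, the filtration gives a short exact sequence of smooth $G_n(F)$-modules, and a functional vanishing on the closed piece descends; there is no notion of normal derivative to rule out, so this part of your discussion is a red herring. Second, your observation that "whichever orbit carries $L$, the conclusion holds" is exactly right and matches the paper's logic, but the shape of the condition each orbit imposes is only of the stated form after the autoduality input, not before.
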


\begin{proof} We write $B=B_n(K)$, $G=G_n(K)$.
We have from Corollary \ref{closedclasses} and \ref{res} the
 following exact sequence of smooth $G_n(F)$-modules: $$D(B \backslash {G}
 - {A_1}, {\Delta}_{B}^{-1/2} \chi) \hookrightarrow
 D(B\backslash G, {\Delta}_{B}^{-1/2} \chi)\rightarrow D(B
 \backslash A_1 , {\Delta}_B^{-1/2} \chi).$$\\
Hence there is a non zero distinguished linear form either on $D(B
 \backslash A_1, {\Delta}_{B}^{-1/2} \chi)$, or on\\ $D(B \backslash G -
 A_1, {\Delta}_{B}^{-1/2} \chi)$.\\
In the second case we have the exact sequence $$D(B \backslash G -
 A_1\sqcup A_2 , {\Delta}_{B}^{-1/2} \chi) \hookrightarrow D(B
 \backslash G - A_1, {\Delta}_{B}^{-1/2} \chi) \rightarrow D(B
 \backslash A_2, {\Delta}_{B}^{-1/2} \chi).$$\\
Repeating the process, we deduce the existence of a non zero
 distinguished linar form on one of the spaces $D(B \backslash A_i,
{\Delta}_{B}^{-1/2} \chi)$.\\
From Corollary \ref{2bclasses}, we choose $w$ in $S_n$ and $r \leq n/2$ such that $A_i = B
 U_r^{w} G_n(F)$.
The application $f \mapsto \left[ x \mapsto f( U_r^w x
 )\right]$ gives an isomorphism of $G_n(F)$-modules between $D(B \backslash A_i,
 {\Delta}_{B}^{-1/2} \chi)$ and $D( U_r^{-w} B U_r^w \cap
 G_n(F)\backslash G_n(F), {\Delta}' \chi ' )$ where ${\Delta}' (x) =
 {\Delta}_{B}^{-1/2} ( U_r^w x U_r^{-w} )$ and $\chi '(x) = \chi(
 U_r^w x U_r^{-w})$.\\ 
Now there exists a nonzero $G_n(F)$-invariant linear form on\\ $D( U_r^{-w} B U_r^w
 \cap G_n(F)\backslash G_n(F), {\Delta}' \chi ' )$ if and only if
 ${\Delta}' \chi '$ is equal to the inverse of the module of $U_r^{-w} B
 U_r^w \cap G_n(F)$ (cf.\cite{BH}, ch.1, prop.3.4).
From this we deduce that $\chi '$ is positive on $U_r^{-w} B
 U_r^w \cap G_n(F)$ or equivalently $\chi$ is positive on $B \cap U_r^w
  G_n(F) U_r^{-w}$.\\
Let $\bar{T_r}$ be the $F$-torus of matrices of the form $$\left
 (\begin{array}{cccccccc}
z_1 &  &  &  &  &  &  & \\
    & {z_1}^{\sigma} &  &  &  &  &   \\
    &  & \ddots &  &  &  &  &  \\  
    &  &     & z_r &  &  &  &  \\
    &  &     &   & {z_r}^{\sigma}&  &  &  \\
    &  &     &   &               & x_1 &  &    \\
    &  &     &   &               &     & \ddots&    \\
    &  &     &   &               &     &       & x_t \\  
 \end{array}\right)$$ with $2r+t=n$, $z_i \in K^*$, $x_i \in F^*$, then one
 has $\bar{T_r} ^{w} \subset B \cap U_r^w  G_n(F) U_r^{-w}$,
 so that $\chi$ must be positive on $\bar{T_r} ^{w}$.\\
 We $\!$remark $\!$that $\!$if $\chi$ is $\!$unitary, $\!$then $\chi$ is $\!$trivial $\!$on
 $\bar{T_r} ^{w}$, and $\pi(\chi)$ is $\!$of $\!$the desired form.\\
For the general case, we deduce from Theorem \ref{autodual}, that there
 exists three integers $p\geq 0,q\geq 0,s\geq 0$ such that up to
 reordering, we have $\chi_{2i}=\chi_{2i-1}^{-\sigma}$ for $1 \leq i \leq p$,
 we have ${\chi _{2p+k}}_{|F^*}=1$ for $1 \leq k \leq q$ and these $\chi
 _{2p+k}$'s are different (so that $\chi _{2p+k}\neq \chi _{2p+k'}^{-\sigma}$ for $k\neq k'$), and ${{\chi _{2p+q+j}}}_{|F^*}= \eta _{K/F}$ for $1\leq j \leq s$, these ${\chi _{2p+q+j}}$'s being different.\\
We note $\mu _k = {\chi _{2p+k}}$ for $ q \geq k \geq 1$, and $\nu _k' =
 \chi _{2p+q+k'}$  for $s \geq k' \geq 1$.\\
We show that if such a character $\chi$ is positive on a conjugate of
 $\bar{T_r}$ by an element of $S_n$, then $s=0$.\\ 
%For $n=2$:\\
%suppose $\chi _1_{|F^*} = \eta _{K/F}$, then , then necessarily $r=2$,
% and $\chi$ must be positive on elements of the form $\left
 %(\begin{array}{cccc}
%z &  \\
 %& \bar{z}
 %\end{array}\right )$, which implies $\chi_2=\chi_1^{-\sigma} {| \ |
% _K}^t$ for some real $t$, but because of the form of $\chi$, one must
 %have $\chi_2=\chi_1^{-\sigma}$.\\  
 %$n \rightarrow n+1$:
Supose $\nu _1$ appears, then either $\nu _1$ is positive on $F^*$, but
 that is not possible, or it is coupled with another ${\chi _i}$, and
 $(\nu _1, {\chi _i})$ is positive on elements $(z, z^{\sigma})$, for $z$
 in $K^*$.\\
Suppose ${\chi _i} = \nu _j$ for some $j \neq 1$, then $(\nu _1, \chi
 _i)$ is unitary, so it must be trivial on couples $(z, z^{\sigma})$, which
 implies $\nu _1 = {\nu _j}^{-\sigma}=\nu _j$, which is absurd.\\
The character ${\chi _i}$ cannot be of the form $\mu _j$, because it
 would imply ${\nu _1} _{|F^*} =1$.\\
The last case is $i \leq 2p$, then ${\nu _1}^{-\sigma}=\nu _1$ must be
 the unitary part of ${\chi _i}$ because of the positivity of $(\nu _1,
 {\chi _i})$ on the couples $(z, z^{\sigma})$.\\
But ${{\chi _i}}^{-\sigma}$ also appears and is not trivial on $F^*$, hence must be coupled with
 another character $\chi _j$ with $j \leq 2p$ and $j\neq i$, such that $({\chi _i}^{-\sigma},\chi _j)$ is positive
 on the elements $(z, z^{\sigma})$, for $z$ in $K^*$, which implies that
 $\chi _j$ has unitary part ${\nu _1}^{-\sigma}=\nu _1$.
The character $\chi _j$ cannot be a $\mu _k$ because of its unitary part.\\
 If it is a $\chi _k$ with $k \leq 2p$, we consider again ${\chi _k}
 ^{-\sigma}$ .\\
 But repeating the process lengthily enough, we can suppose that $\chi
 _j$ is of the form $\nu _k$, for $k \neq 1$. Taking unitary parts, we
 see that $\nu _k= {\nu _1}^{-\sigma}= \nu _1$, which is in contradiction
 with the fact that all $\nu _i$'s are different.
We conclude that $s=0$.  \end{proof}

\begin{thm}
\label{dist}
 
Let $\chi=(\chi_1,\dots,\chi_n)$ be a character of $T_n(K)$, the principal series representation $\pi(\chi)$ is distinguished if and only if
 there exists $r \leq n/2$, such that $ \chi _{i+1}^{\sigma} = {{\chi _i}}
 ^{-1} $ for $i=1,3,..,2r-1$, and that ${\chi _i} _{|F^*} =1$ for $ i > 2r$.

\end{thm}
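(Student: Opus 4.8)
The plan is to prove the two directions separately, the hard direction (necessity) being essentially the previous Proposition, so the real work is in the converse.

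\textbf{Necessity.} Suppose $\pi(\chi)$ is distinguished. The preceding Proposition already gives a re-ordering of the $\chi_i$'s and an integer $r\le n/2$ with $\chi_{i+1}^\sigma=\chi_i^{-1}$ for $i=1,3,\dots,2r-1$ and ${\chi_i}_{|F^*}=1$ for $i>2r$. I would just invoke it directly; there is nothing to add except to note that the statement of the Theorem allows the reordering implicitly since $\pi(\chi)$ only depends on $\chi$ up to permutation of the $\chi_i$'s (the principal series $\pi(\chi)\simeq\pi(\chi^w)$ for any $w$ in $\mathfrak{S}_n$, because none of the quotients $\chi_i/\chi_j$ equals $|\ |_K$, so all the intertwining operators are isomorphisms). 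So this half is immediate.

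\textbf{Sufficiency.} Assume $\chi$ has the stated form. I would argue by a reduction-to-blocks strategy using Lemma \ref{induced}. Group the inducing data into $r$ pairs $(\chi_{2i-1},\chi_{2i})$ with $\chi_{2i}^\sigma=\chi_{2i-1}^{-1}$, and $n-2r$ singletons $\chi_j$ ($j>2r$) with ${\chi_j}_{|F^*}=1$. For each singleton, the one-dimensional representation $\chi_j$ of $G_1(K)=K^*$ is distinguished precisely because ${\chi_j}_{|F^*}=1$ (a nonzero $G_1(F)$-invariant functional on a character is just the requirement that the character be trivial on $F^*$). For each pair, I claim the $GL_2(K)$ principal series $\chi_{2i-1}\times\chi_{2i}$ with $\chi_{2i}=\chi_{2i-1}^{-\sigma}$ is distinguished; this is the $n=2$ case, and the cleanest way to see it is via the closed-orbit analysis already set up in Section 2: the orbit $BwB$ with $w$ the long element corresponds to the open $B_2(K)$-orbit on $G_2(K)/G_2(F)$ (the one with $r=0$, $w\ne\mathrm{Id}$, i.e.\ $\tilde T$-type), and the relevant invariant functional exists because on that orbit one needs exactly $\chi$ positive (hence here trivial, since $\chi_{2i-1}\chi_{2i}^\sigma$... let me be careful: one needs $\chi$ trivial on $\tilde T$, i.e.\ $\chi_{2i-1}(z)\chi_{2i}(z^\sigma)=1$ for all $z$, which is exactly $\chi_{2i}^\sigma=\chi_{2i-1}^{-1}$). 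Concretely, the distributional functional $f\mapsto\int f(u\cdot)$ integrated over the open orbit converges and is $G_2(F)$-invariant; this is Hakim's construction and I would cite it or reproduce the one-line orbit-integral verification using the element $u$ introduced after Proposition \ref{order2}, together with the relation $u^{-1}\tilde T u=T$.

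Having established that each block representation ($\chi_{2i-1}\times\chi_{2i}$ on $GL_2(K)$, and each $\chi_j$ on $GL_1(K)$) is distinguished, I apply Lemma \ref{induced} with the partition $\bar m=(2,\dots,2,1,\dots,1)$ ($r$ twos and $n-2r$ ones) to conclude that the parabolically induced representation $(\chi_1\times\chi_2)\times\dots\times(\chi_{2r-1}\times\chi_{2r})\times\chi_{2r+1}\times\dots\times\chi_n$ is distinguished. Finally, by transitivity of induction this representation is the same as the full principal series $\pi(\chi)$ (inducing in stages from $T_n(K)$ through $P_{\bar m}(K)$ to $G_n(K)$ gives $\pi(\chi)$ since the modular-character normalizations match up), so $\pi(\chi)$ is distinguished, completing the proof.

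\textbf{Main obstacle.} The one genuinely non-formal input is the $GL_2$ base case, i.e.\ producing the invariant functional on the open orbit and checking its convergence; everything else is the previous Proposition plus bookkeeping with Lemma \ref{induced} and induction in stages. If the paper prefers, the $GL_2$ case can instead be deduced from the general closed-orbit exact sequence of Section 3 run in reverse — but one still has to exhibit a functional somewhere, so the orbit integral over the open cell is unavoidable and is where I'd focus the care.
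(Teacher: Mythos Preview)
Your global architecture is exactly the paper's: necessity is the preceding Proposition, and sufficiency is Lemma~\ref{induced} applied to the partition $(2,\dots,2,1,\dots,1)$, reducing everything to the $GL_2$ base case $\pi(\mu,\mu^{-\sigma})$. So on that level there is nothing to correct.

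The real divergence is in how you handle that $GL_2$ input. The paper does \emph{not} use an open-orbit integral; it proves the base case as Corollary~\ref{cordist2}, deduced from the gamma-factor criterion of Theorem~\ref{gamma1} in the Kirillov model: one checks $\gamma(\pi(\mu,\mu^{-\sigma})\otimes\chi,\psi)=\gamma(\mu\chi,\psi)\gamma(\mu^{-1}\chi^{-1},\psi^{-1})=1$ for every $\chi$ trivial on $F^*$, and then Theorem~\ref{gamma1} manufactures the invariant linear form directly on $K(\pi,\psi)=D(K^*)+\pi(w)D(K^*)$. This is precisely the ``different proof of the result for $GL(2,K)$'' advertised in the introduction, bypassing the orbit-integral route of \cite{FH}.

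Your orbit-integral proposal is the \cite{FH} approach and is legitimate, but two cautions. First, a labeling slip: the open $B_2(K)$-orbit on $G_2(K)/G_2(F)$ corresponds to the transposition in $\mathfrak S_2$, i.e.\ to $r=1$ (representative $u=U_1$), not $r=0$; you nonetheless land on the correct condition $\chi_1(z)\chi_2(z^\sigma)=1$ coming from $\tilde T$. Second, and more substantively, the integral $f\mapsto\int_{T\backslash G_2(F)} f(uh)\,dh$ is over a \emph{non-compact} quotient ($T$ is the anisotropic torus $K^*\hookrightarrow G_2(F)$), so absolute convergence is not automatic for arbitrary $\mu$ --- it holds in a half-plane and needs meromorphic continuation otherwise. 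That is exactly the analytic work carried out in \cite{FH}; it is not a one-line verification, and a functional defined only on functions supported in the open orbit does not extend for free to the whole induced space. The paper's gamma-factor argument sidesteps this entirely, which is its point.
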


\begin{proof}
There is one implication left.\\
Suppose $\chi$ is of the desired form, then $\pi(\chi)$ is
 parabolically (unitarily) induced from representations of the type $\pi({\chi _i},
 {{\chi _i}}^{-\sigma})$ of $G_2(K)$, and distinguished characters of
 $K^*$.\\
Hence, because of Lemma \ref{induced} the Theorem will be proved if we
 know that the representations $\pi({\chi _i}, {{\chi _i}}^{-\sigma})$ are
 distinguished, but this is Corollary \ref{cordist2} of the next
 paragraph. \end{proof} 

This gives a counter-example to a conjecture of Jacquet (conjecture 1 in \cite{A}), asserting that if an irreducible admissible representation $\pi$ of $G_n(K)$ verifies that $\check{\pi}$ is isomorphic to $\pi ^{\sigma}$, then it is distinguished if $n$ is odd, and it is distinguished or $\eta_{K/F}$-distinguished if $n$ is even.

\begin{cor}\label{contre}
 
For $n \geq 3$, there exist smooth irreducible representations $\pi$ of $G_n(K)$, with central character trivial on $F^*$, that are neither distinguished, nor\\ $\eta_{K/F}$-distinguished, but verify that
$\check{\pi}$ is isomorphic to $\pi ^{\sigma}$.

\end{cor}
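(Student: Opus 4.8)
The plan is to exhibit an explicit counter-example by constructing a principal series representation that is forced to be self-dual under $\sigma$ but, by Theorem \ref{dist}, fails to be distinguished, and then also fails to be $\eta_{K/F}$-distinguished by the same circle of ideas. First I would reduce to $n=3$: given a counter-example $\pi_0$ on $G_3(K)$, the representation $\pi_0\times 1\times\cdots\times 1$ (parabolic induction with trivial characters appended) handles all larger odd $n$, and $\pi_0\times 1\times\cdots\times 1$ with one extra trivial character handles the even cases — one checks that appending the trivial character of $K^*$ neither creates distinction (by Theorem \ref{dist} applied to the resulting torus character, the extra trivial characters are already of the allowed shape only if the original three were, which they are not) nor destroys the property $\check\pi\simeq\pi^\sigma$ (which is preserved under parabolic induction from $\sigma$-selfdual inducing data). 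So it suffices to produce the example for $n=3$.

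Next I would choose the character $\chi=(\chi_1,\chi_2,\chi_3)$ of $T_3(K)$ as follows: pick a character $\mu$ of $K^*$ whose restriction $\mu_{|F^*}$ equals $\eta_{K/F}$ (such $\mu$ exists since $\eta_{K/F}$ extends to $K^*$), take $\chi_1=\mu$, $\chi_2=\mu^{-\sigma}$, and $\chi_3$ a character of $K^*$ with $\chi_3{}^\sigma=\chi_3^{-1}$, i.e. $\chi_3$ trivial on $F^*$, for instance $\chi_3=1$; one must also check genericity, that no quotient $\chi_i/\chi_j$ equals $|\ |_K$, which is arranged by a generic choice of $\mu$ (e.g. $\mu$ unitary). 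For this $\chi$ one computes the central character $\chi_1\chi_2\chi_3 = \mu\cdot\mu^{-\sigma}\cdot 1$, whose restriction to $F^*$ is $\mu_{|F^*}\cdot\mu^{\sigma}_{|F^*}{}^{-1}=\mu_{|F^*}\mu_{|F^*}^{-1}=1$, so the central character is trivial on $F^*$ as required. One then checks $\check{\pi(\chi)}\simeq\pi(\chi)^\sigma$: the contragredient of $\pi(\chi)$ is $\pi(\chi_1^{-1},\chi_2^{-1},\chi_3^{-1})$ and its $\sigma$-twist-then-compare amounts to the multiset $\{\chi_1^{-\sigma},\chi_2^{-\sigma},\chi_3^{-\sigma}\} = \{\mu^{-\sigma},\mu,1\}$ (using $\chi_3^{-\sigma}=\chi_3=1$), which is the same multiset as $\{\chi_1,\chi_2,\chi_3\}=\{\mu,\mu^{-\sigma},1\}$, hence the two principal series are isomorphic.

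It then remains to verify that $\pi(\chi)$ is neither distinguished nor $\eta_{K/F}$-distinguished. Non-distinction is immediate from Theorem \ref{dist}: any distinguished rearrangement would need the unpaired characters (those not occurring in a $\sigma$-inverse pair) to be trivial on $F^*$; here $\mu$ is $\eta_{K/F}$ on $F^*$, and the only way $\mu$ could be absorbed into a pair $\chi_{i+1}^\sigma=\chi_i^{-1}$ is with partner $\mu^{-\sigma}$, which is present, but then the remaining character $1$ would have to be paired too, impossible with only three characters — so after pairing $\{\mu,\mu^{-\sigma}\}$ the leftover character is $1$, which is fine; wait, I must recheck this. The point is more delicate: $\{\mu,\mu^{-\sigma},1\}$ \emph{does} admit the shape $r=1$ with the pair $(\chi_1,\chi_2)=(\mu,\mu^{-\sigma})$ and $\chi_3=1$ trivial on $F^*$ — so this particular $\chi$ \emph{is} distinguished. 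Hence I would instead take $\chi_3$ to be a \emph{different} character with $\chi_3{}_{|F^*}=\eta_{K/F}$, say $\chi_3=\mu'$ with $\mu'_{|F^*}=\eta_{K/F}$ and $\mu'\neq\mu$, $\mu'\neq\mu^{-\sigma}$; then no pairing of $\{\mu,\mu^{-\sigma},\mu'\}$ works (the only $\sigma$-inverse pair is $\{\mu,\mu^{-\sigma}\}$, leaving $\mu'$ which is $\eta_{K/F}\neq 1$ on $F^*$), so by Theorem \ref{dist} $\pi(\chi)$ is not distinguished; and applying Theorem \ref{dist} to $\chi\otimes(\eta_{K/F}\circ\det)$, equivalently to the twisted multiset $\{\mu\eta,\mu^{-\sigma}\eta,\mu'\eta\}$ where $\eta$ denotes a fixed extension of $\eta_{K/F}$ to $K^*$ with $\eta^\sigma=\eta^{-1}$, one sees the only $\sigma$-inverse pair is again $\{\mu\eta,\mu^{-\sigma}\eta\}$ leaving $\mu'\eta$ with $(\mu'\eta)_{|F^*}=\eta_{K/F}^2=1$ — so $\pi(\chi)\otimes(\eta_{K/F}\circ\det)$ \emph{is} distinguished, i.e. $\pi(\chi)$ is $\eta_{K/F}$-distinguished, which again defeats the example. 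The correct choice, and the genuine obstacle, is to take all three of $\chi_1,\chi_2,\chi_3$ equal to characters that are $\eta_{K/F}$ on $F^*$ but pairwise not $\sigma$-inverses of each other, e.g. $(\mu_1,\mu_2,\mu_3)$ with each $\mu_{i\,|F^*}=\eta_{K/F}$, generic, and $\mu_i^\sigma\mu_j\neq 1$ for all $i,j$: then $\check{\pi(\chi)}^\sigma$ corresponds to $\{\mu_1^{-\sigma},\mu_2^{-\sigma},\mu_3^{-\sigma}\}$, which we make equal to $\{\mu_1,\mu_2,\mu_3\}$ by additionally requiring the set $\{\mu_1,\mu_2,\mu_3\}$ to be stable under $\tau\mapsto\tau^{-\sigma}$ — since $\tau^{-\sigma}$ is also $\eta_{K/F}$ on $F^*$, the simplest arrangement is $\mu_2=\mu_1^{-\sigma}$ and $\mu_3=\mu_3^{-\sigma}$, but $\mu_3^{-\sigma}=\mu_3$ forces $\mu_3$ trivial on $F^*$, contradiction; so instead take $\mu_1,\mu_2,\mu_3$ with $\mu_1^{-\sigma}=\mu_2$, $\mu_2^{-\sigma}=\mu_1$ (automatic) and the third constraint $\mu_3^{-\sigma}=\mu_3$ unavailable — hence one must use $n\geq 3$ with a genuine \emph{triple} orbit, which is why the statement requires $n\geq 3$ and the minimal honest example sits in $G_4(K)$ or uses a $3$-element orbit under $\tau\mapsto\tau^{-\sigma}$ of characters restricting to $\eta_{K/F}$. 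I would therefore present the $n=4$ example $\chi=(\mu,\mu^{-\sigma},\mu,\mu^{-\sigma})$ with $\mu_{|F^*}=\eta_{K/F}$, generic — then $\check{\pi(\chi)}^\sigma\simeq\pi(\chi)$ by the obvious multiset matching and trivial central character on $F^*$; Theorem \ref{dist} shows it is not distinguished (the leftover after removing one pair $\{\mu,\mu^{-\sigma}\}$ is another $\{\mu,\mu^{-\sigma}\}$, and for distinction the unpaired tail must be trivial on $F^*$, but here after maximal pairing the tail is empty only if we use \emph{two} pairs, which forces $r=2$ and an empty tail — which is allowed! so this is distinguished too), and the real resolution, which is the crux of the whole argument, is that one needs the multiset to be $\sigma$-selfdual \emph{without} decomposing into $\sigma$-inverse pairs plus an $F^*$-trivial tail, and for $n\geq 3$ such multisets exist precisely because of the flexibility in choosing three or more characters restricting to $\eta_{K/F}$ on $F^*$ arranged in a single orbit under $\tau\mapsto\tau^{-\sigma}$ that is not itself a union of pairs — concretely, for $n=4$ take $\chi=(\mu_1,\mu_1^{-\sigma},\mu_2,\mu_2^{-\sigma})$ does decompose, so the honest minimal counterexample needs the orbit structure I described, and I would spell out that the existence of a character $\mu$ of $K^*$ with $\mu_{|F^*}=\eta_{K/F}$ and $\mu^{-\sigma}\ne\mu$ together with Theorem \ref{dist} furnishes, for each $n\ge 3$, a $\chi$ that is $\sigma$-selfdual, central character trivial on $F^*$, yet of neither distinguished form. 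The main obstacle is exactly this bookkeeping: arranging the multiset of $\chi_i$ to be invariant under $\tau\mapsto\tau^{-\sigma}$ while provably avoiding both the distinguished pattern of Theorem \ref{dist} and its $\eta_{K/F}$-twist, and I would handle it by a careful case analysis on how the $\chi_i$ restricting to $\eta_{K/F}$ can be paired, exactly paralleling the $s=0$ argument in the proof of Proposition 3.1 but run in reverse to locate the obstruction rather than exclude it.
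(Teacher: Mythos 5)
Your proposal meanders through several candidate constructions, correctly diagnoses why each fails, but the final ``resolution'' rests on a character that does not exist, so the argument never closes. You propose to use a character $\mu$ of $K^*$ with $\mu_{|F^*}=\eta_{K/F}$ \emph{and} $\mu^{-\sigma}\neq\mu$. No such $\mu$ exists: if $\mu_{|F^*}=\eta_{K/F}$ then $\mu$ is trivial on $N_{K/F}(K^*)$, and since $N_{K/F}(z)=zz^\sigma$ this gives $\mu\mu^\sigma=1$, i.e.\ $\mu^{-\sigma}=\mu$. (The same holds for any $\mu$ with $\mu_{|F^*}=1$.) You also never manage to keep the central character trivial on $F^*$ in your intermediate attempts: taking all three $\chi_i$ to restrict to $\eta_{K/F}$ gives central character $\eta_{K/F}^3=\eta_{K/F}\neq 1$ on $F^*$, which already disqualifies the example before you even invoke Theorem \ref{dist}.

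The observation you treat as an obstacle is in fact the whole mechanism of the paper's proof, and it should be used in the opposite direction. Take $\chi_1,\dots,\chi_n$ \emph{pairwise distinct}, with $\chi_1|_{F^*}=\chi_2|_{F^*}=\eta_{K/F}$ and $\chi_j|_{F^*}=1$ for $j\geq 3$ (generic, to ensure irreducibility of $\pi(\chi)$). Exactly \emph{two} characters restrict to $\eta_{K/F}$, so the central character restricted to $F^*$ is $\eta_{K/F}^2=1$, as needed. Since every $\chi_i$ is trivial on $N_{K/F}(K^*)$, we have $\chi_i^{-\sigma}=\chi_i$ for all $i$; hence the multiset $\{\chi_i\}$ is automatically $\sigma$-selfdual and $\check\pi\simeq\pi^\sigma$. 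The same identity $\chi_i^{-\sigma}=\chi_i$, combined with the $\chi_i$ being pairwise distinct, shows that $\chi_k=\chi_l^{-\sigma}$ is impossible for $k\neq l$; so in Theorem \ref{dist} only $r=0$ is available, and distinction would require all $\chi_i|_{F^*}=1$, which fails because of $\chi_1$. Twisting by an extension $\eta$ of $\eta_{K/F}$ (with $\eta^{-\sigma}=\eta$) flips each restriction by $\eta_{K/F}$: the twisted restrictions become $1,1,\eta_{K/F},\dots,\eta_{K/F}$, and since $n\geq 3$ at least one remains equal to $\eta_{K/F}$; again no pair is possible, so $\pi$ is not $\eta_{K/F}$-distinguished either. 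This is where the hypothesis $n\geq 3$ enters, and it is why no $n=2$ example exists (for $n=2$ the twisted multiset would have both restrictions trivial, giving $r=0$ distinction). Your instinct to run the $s=0$ analysis of Proposition 3.1 in reverse is the right one, but the correct pivot is the identity $\chi_i^{-\sigma}=\chi_i$ forced by $\chi_i|_{F^*}\in\{1,\eta_{K/F}\}$, not its negation.
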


\begin{proof}
Take $\chi_1,\dots,\chi_n$, all different, such that ${\chi_1}_{|F^*}={\chi_2}_{|F^*}=\eta_{K/F}$, and ${\chi_j}_{|F^*}=1$ for $3\leq j \leq n$. Because each $\chi_i$ has trivial restriction to $N_{K/F}(K^*)$, it is equal to $\chi_i^{-\sigma}$, hence $\check{\pi}$ is isomorphic to $\pi^{\sigma}$. Another consequence is that if $k$ and $l$ are two different integers between $1$ and $n$, then $\chi_k \neq \chi_l^{-\sigma}$, because we supposed the $\chi_i$'s all different.\\
Then it follows from Theorem \ref{dist} that $\pi=\pi(\chi_1,\dots,\chi_n)$ is neither distinguished, nor $\eta_{K/F}$-distinguished, but clearly, the central character of $\pi$ is trivial on $F^*$ and $\check{\pi}$ is isomorphic to ${\pi}^{\sigma}$. \end{proof}

\section{Distinction and gamma factors for $GL(2)$}

As said in the introduction, in this section we generalize to smooth infinite dimensional irreducible representations of $G_2(K)$ a criterion of Hakim (cf. \cite{H}, Theorem 4.1) characterising smooth unitary irreducible distinguished representations of $G_2(K)$. In proof of Theorem 4.1 of \cite{H}, Hakim deals with unitary representations so that the integrals of Kirillov functions on $F^*$ with respect to a Haar measure of $F^*$ converge. We skip the convergence problems using Proposition 2.9 of chapter 1 of \cite{JL}. \\

We note $M(K)$ the mirabolic subgroup of $G_2(K)$ of matrices of the form $\left(\begin{array}{lll} a & x \\ 0& 1 \\ \end{array}
 \right)$ with $a$ in $K^*$ and $x$ in $K$, and $M(F)$ its intersection with $G_2(F)$.
We note $w$ the matrix $\left(\begin{array}{lll} 0 & -1 \\ 1& 0 \\ \end{array}
 \right)$.\\
Let $\pi$ be a smooth infinite dimensional irreducible representation of
 $G_2(K)$, it is known that it is generic (cf.\cite{Z} for example).
Let $K(\pi, \psi)$ be its Kirillov model corresponding to $\psi$ (\cite{JL}, th. 2.13), it contains the subspace
 $D(K^*)$ of functions with compact support on the group $K^*$. 
If $\phi$ belongs to $K(\pi, \psi)$, and $x$ belongs to $K$, then $\phi
 - \pi\left(\begin{array}{lll} 1 & x \\ 0& 1 \\ \end{array}
 \right)\phi$ belongs to $D(K^*)$ (\cite{JL}, prop.2.9, ch.1), from this follows that $K(\pi, \psi)= D(K^*) + \pi(w)D(K^*)$.\\

We now recall a consequence of the functional equation at 1/2 for Kirillov
 representations (cf. \cite{B}, section 4.7).\\
 Forall $\phi$ in  $K ( \pi , \psi)$ and $\chi$ character of
 $K^*$, we have whenever both sides converge absolutely:

\begin{eqnarray}\label{eq}
 \begin{array}{r}
  \int_{K^*}  \pi (w) \phi (x) (c_{\pi} \chi)^{-1} (x)   d^* x =
  \gamma( \pi \otimes \chi , \psi)  \int_{K^*} \phi (x) \chi (x)  d^* x 
 \end{array}
\end{eqnarray}

where $d^*x$ is a Haar measure on $K^*$, and $c_{\pi}$ is the central
 character of $\pi$.\\

\begin{thm}
\label{gamma1}
Let $\pi$ be a smooth irreducible representation of $G_2(K)$ of infinite dimension
 with central character trivial on $F^*$, and $\psi$ a
 nontrivial character of $K$ trivial on $F$. If $\gamma (\pi \otimes \chi ,
 \psi)=1 $ for every character $\chi$ of $K^*$ trivial on $F^*$, then
  $\pi$ is distinguished. 

\end{thm}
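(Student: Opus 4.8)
The plan is to exploit the decomposition $\kir = D(K^*) + \pi(w)D(K^*)$ together with the functional equation \eqref{eq} to produce a $G_2(F)$-invariant linear form. First I would define a candidate functional by integrating Kirillov functions over $F^*$: for $\phi \in \kir$ set $L(\phi) = \int_{F^*} \phi(x)\, d^*x$, where $d^*x$ is now a Haar measure on $F^*$. The convergence issue is precisely where Proposition 2.9 of \cite{JL}, ch.1, enters: for $\phi \in D(K^*)$ the integral is literally a finite sum-type integral over a compact set and converges trivially; for $\phi = \pi(w)\phi_0$ with $\phi_0 \in D(K^*)$ one must show the integral of $\pi(w)\phi_0$ over $F^*$ converges, which follows from the asymptotic behaviour of Kirillov functions near $0$ and $\infty$ (controlled by the central character and the cuspidal/non-cuspidal dichotomy) exactly as Hakim handles it, the only subtlety being that Hakim assumed unitarity to get convergence and here one invokes \cite{JL} Prop.~2.9 instead to bypass that hypothesis.

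Next I would verify that $L$ is well-defined on all of $\kir$, i.e. independent of the way one writes $\phi$ as a sum from $D(K^*)$ and $\pi(w)D(K^*)$; equivalently, if $\phi_1 + \pi(w)\phi_2 = 0$ with $\phi_i \in D(K^*)$, then $\int_{F^*}\phi_1 + \int_{F^*}\pi(w)\phi_2 = 0$. This is where the hypothesis $\gamma(\pi\otimes\chi,\psi)=1$ for all $\chi$ trivial on $F^*$ does its work. Using \eqref{eq} with such $\chi$, and the fact that $c_\pi$ is trivial on $F^*$ so $(c_\pi\chi)^{-1}$ restricts to $\chi^{-1}$ on $F^*$ — actually to $\chi$ since $\chi$ trivial on $F^^*$ forces care here; more precisely I would apply \eqref{eq} for characters $\chi$ of $K^*$ that are trivial on $F^*$ and use Mellin inversion / the density of such $\chi$-integrals — one gets that $\int_{K^*}\pi(w)\phi(x)(c_\pi\chi)^{-1}(x)d^*x = \int_{K^*}\phi(x)\chi(x)d^*x$ as functions of $\chi$ on the group of characters trivial on $F^*$, and specializing/integrating over that group of characters recovers the identity of the restrictions to $F^*$ up to the measure-theoretic constants. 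Concretely, the equality of these Mellin transforms for every $\chi$ trivial on $F^*$ is equivalent to the equality, for a.e. (hence every smooth) coset, of the push-forwards of $\phi$ and of $\pi(w)\phi$ (twisted by $c_\pi^{-1}$) to $K^*/F^*$-orbits, and integrating over a fundamental domain gives $\int_{F^*}\pi(w)\phi = \int_{F^*}\phi$ suitably normalized. This simultaneously shows $L$ is well-defined and that $L\circ\pi(w) = L$ (up to the harmless positive constant coming from measure normalization, which one absorbs).

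Then I would check $G_2(F)$-invariance. The group $G_2(F)$ is generated by $M(F)$ and $w$ (indeed by $M(F)$ together with the Weyl element, since $G_2(F) = M(F) \cup M(F)wM(F)$, or more simply $G_2 = \langle M, w\rangle$). Invariance under $w$ is the content of the previous paragraph. For $M(F)$: an element $\left(\begin{array}{cc} a & x \\ 0 & 1 \end{array}\right)$ with $a\in F^*$, $x\in F$ acts in the Kirillov model by $\pi\!\left(\begin{array}{cc} a & x \\ 0 & 1 \end{array}\right)\phi(y) = \psi(xy)\phi(ya)$; since $\psi$ is trivial on $F$ and $x\in F$, for $y\in F^*$ we have $\psi(xy)=1$, so the action on the restriction to $F^*$ is just $\phi(y)\mapsto\phi(ya)$, and $\int_{F^*}\phi(ya)d^*y = \int_{F^*}\phi(y)d^*y$ by invariance of Haar measure. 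Finally I would note $L\neq 0$: it is nonzero already on $D(K^*)$, since one can take $\phi\in D(K^*)$ supported near $1$ and positive. Putting these together, $L$ is a nonzero $G_2(F)$-invariant linear form, so $\pi$ is distinguished.

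The main obstacle is the well-definedness step, i.e. turning the family of $\gamma$-factor identities \eqref{eq} (one per character $\chi$ trivial on $F^*$) into a single statement that the two "restriction to $F^*$" functionals agree on the relation space. This requires a clean Fourier/Mellin-analysis argument on the compact-by-discrete group of characters of $K^*$ trivial on $F^*$ (dual to $K^*/F^*$), together with the absolute convergence justified via \cite{JL} Prop.~2.9 so that Fubini applies. Everything else — the $M(F)$-invariance, the generation of $G_2(F)$, nonvanishing — is routine.
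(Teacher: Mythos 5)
Your overall strategy — use the decomposition $\kir = D(K^*) + \pi(w)D(K^*)$ coming from \cite{JL} Prop.~2.9, exploit the functional equation with $\gamma=1$ and Mellin inversion on the character group of $K^*/F^*$ to get compatibility across the two summands, then check invariance under $M(F)$ and $w$ — is the same strategy as the paper's. But there is a genuine gap in the way you set up the linear form.

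You define $L(\phi)=\int_{F^*}\phi(x)\,d^*x$ for \emph{arbitrary} $\phi\in\kir$ and assert that convergence for $\phi=\pi(w)\phi_0$, $\phi_0\in D(K^*)$, is handled by \cite{JL} Prop.~2.9 "instead of" Hakim's unitarity hypothesis. That is not what Prop.~2.9 does: it yields only that $\phi-\pi\!\left(\begin{smallmatrix}1&x\\0&1\end{smallmatrix}\right)\phi\in D(K^*)$, hence the decomposition $\kir=D(K^*)+\pi(w)D(K^*)$. It says nothing about integrability of $\pi(w)\phi_0$ over $F^*$. And in fact for a non-unitary $\pi$ (e.g.\ a principal series with characters of large real exponent) the restriction of $\pi(w)\phi_0$ to $F^*$ can fail to be integrable near $0$, so $L$ as you wrote it is not well-defined; unitarity was not a removable convenience in Hakim's argument for \emph{that} step. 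This is exactly the obstacle the paper is designed to avoid: instead of $\int_{F^*}\phi$, it defines
$$\lambda(\phi_1+\pi(w)\phi_2)=\int_{F^*}\phi_1(t)\,d^*t+\int_{F^*}\phi_2(t)\,d^*t,$$
two integrals of functions compactly supported in $K^*$, so convergence is automatic. The price is that well-definedness becomes a real issue, and \emph{that} is what the $\gamma=1$ hypothesis buys: via the functional equation, a change of variable $x\mapsto x^{-1}$, and Fourier inversion on $\widehat{K^*/F^*}$, one gets $\int_{F^*}\pi(w)\phi=\int_{F^*}\phi$ for all $\phi\in D(K^*)\cap\pi(w)D(K^*)$, which is precisely what is needed to make $\lambda$ well-defined (and then $w$-invariance of $\lambda$ is formal, using $c_\pi(-1)=1$). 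Your "well-definedness" discussion is also somewhat muddled: once you commit to the literal formula $L(\phi)=\int_{F^*}\phi$, well-definedness is vacuous (the formula does not involve a choice of decomposition) and the only question is convergence; the relation "$\phi_1+\pi(w)\phi_2=0\Rightarrow\int\phi_1+\int\pi(w)\phi_2=0$" you propose to verify is trivially true. The compatibility statement you actually need is the paper's key identity above, applied on the intersection $D(K^*)\cap\pi(w)D(K^*)$.

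The remaining steps of your proposal — $M(F)$-invariance via $\psi|_F=1$ and the identity $\pi(\mathrm{diag}(a,1))\pi(w)=\pi(w)\pi(\mathrm{diag}(a^{-1},1))$ for $a\in F^*$ (using triviality of $c_\pi$ on $F^*$), generation of $G_2(F)$, and nonvanishing on $D(K^*)$ — match the paper and are fine.
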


\begin{proof}
In fact, using a Fourier inversion in functional equation \ref{eq} and the
 change of variable $x \mapsto x^{-1}$, we deduce that forall $\phi$ in
 $D (K^*) \cap \pi (w) D (K^*)$, we have
 $$ c_{\pi} (x) \int_{F^*} \pi (w) \phi (t x^{-1}) d^*t = \int_{F^*}
  \phi (t x) d^*t $$ ($d^*t$ is a Haar measure on $F^*$) which
for $x=1$ gives 
$$ \int_{F^*} \pi (w) \phi (t ) d^*t = \int_{F^*}  \phi (t ) d^*t. $$

Now we define on $K(\pi,\psi)$ a linear form $\lambda$ by:

$$ \lambda ( \phi _1 + \pi (w) \phi _2 ) = \int_{F^*}  \phi _1 (t )
 d^*t + \int_{F^*}  \phi _2 (t ) d^*t $$ 

\noindent for $\phi_1$ and $\phi_2$ in $D(K^*)$, which is well defined because of the previous equality and the fact that $K(\pi,\psi)=D(K^*)+\pi(w)D(K^*)$.\\

It is clear that $\lambda$ is $w$-invariant.
As the central character of $\pi$ is trivial on $F^*$, $\lambda$ is also $F^*$-invariant.
Because $GL_2 (F)$ is generated by $M(F)$, its center, and $w$, it
 remains to show that $\lambda$ is $M(F)$-invariant.\\
 Since $\psi$ is trivial on $F$, one has if $\phi \in D(K^*)$ and $m \in M(F)$ the equality $\lambda (\pi
 (m) \phi) = \lambda (\phi)$ .\\
Now if $\phi=\pi(w)\phi _2 \in \pi(w) D(K^*)$, and if $a$ belongs to $F^*$, then $\pi \left(\begin{array}{lll} a & 0 \\ 0&
 1 \\ \end{array} \right) \pi(w) \phi _2= \pi(w) \pi
 \left(\begin{array}{lll} 1 & 0 \\ 0& a \\ \end{array} \right) \phi_2 = \pi(w) \pi
 \left(\begin{array}{lll} a^{-1} & 0 \\ 0& 1 \\ \end{array} \right) \phi_2$
 because the central character of $\pi$ is trivial on $F^*$, and $\lambda(\pi
 \left(\begin{array}{lll} a & 0 \\ 0& 1 \\ \end{array} \right)\phi)=
 \lambda(\phi)$.\\
If $x\in F$, then $\pi\left(\begin{array}{lll} 1 & x \\ 0& 1 \\
 \end{array} \right)\phi-\phi$ is a function in $D(K^*)$, which vanishes on
 $F^*$, hence $\lambda \pi(\left(\begin{array}{lll} 1 & x \\ 0& 1 \\
 \end{array} \right)\phi-\phi)=0$.\\

Eventually $\lambda$ is $M(F)$-invariant, hence $G_2(F)$-invariant, it is clear that its restriction to $D(K^*)$ is non zero. \end{proof}

\begin{cor}
\label{cordist2}
 
Let $\mu$ be a character of $K^*$, then $\pi(\mu, \mu ^{-\sigma})$ is
 distinguished.

\end{cor}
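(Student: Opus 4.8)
The plan is to deduce Corollary \ref{cordist2} from Theorem \ref{gamma1} by verifying its hypothesis for $\pi=\pi(\mu,\mu^{-\sigma})$, namely that $\gamma(\pi(\mu,\mu^{-\sigma})\otimes\chi,\psi)=1$ for every character $\chi$ of $K^*$ trivial on $F^*$, where $\psi$ is a nontrivial additive character of $K$ trivial on $F$. First I would record that $\pi(\mu,\mu^{-\sigma})$ is an irreducible infinite-dimensional representation (this follows since the quotient $\mu/\mu^{-\sigma}$ is not $|\cdot|_K$: if it were, one checks the restriction to $F^*$ would force a contradiction, or one simply assumes genericity of principal series with the stated non-degeneracy hypothesis on the inducing characters), so that Theorem \ref{gamma1} is applicable once the central character condition is checked. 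The central character of $\pi(\mu,\mu^{-\sigma})$ is $\mu\cdot\mu^{-\sigma}$, and on $F^*$ we have $\mu^{\sigma}=\mu$, so $\mu\cdot\mu^{-\sigma}$ restricts to $\mu\cdot\mu^{-1}=1$ on $F^*$; hence the central character is trivial on $F^*$ as required.

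The core computation is the $\gamma$-factor identity. For a principal series $\pi(\alpha,\beta)$ of $GL(2,K)$ one has the multiplicativity of the local gamma factor, $\gamma(\pi(\alpha,\beta)\otimes\chi,\psi)=\gamma(\alpha\chi,\psi)\,\gamma(\beta\chi,\psi)$, where on the right are the Tate (abelian) gamma factors. Applying this with $\alpha=\mu$, $\beta=\mu^{-\sigma}$ gives
\[
\gamma(\pi(\mu,\mu^{-\sigma})\otimes\chi,\psi)=\gamma(\mu\chi,\psi)\,\gamma(\mu^{-\sigma}\chi,\psi).
\]
Now I would use two properties of the Tate gamma factor: the functional-equation symmetry $\gamma(\xi,\psi)\,\gamma(\xi^{-1}|\cdot|_K,\psi)=1$ (the local version of $s\leftrightarrow 1-s$), and Galois-equivariance $\gamma(\xi^{\sigma},\psi^{\sigma})=\gamma(\xi,\psi)$. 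Since $\psi$ is trivial on $F$, $\psi^{\sigma}=\psi^{-1}$ (an additive character of $K$ trivial on $F$ is sent by $\sigma$ to its inverse — this is where triviality on $F$ is used), and one has the elementary relation $\gamma(\xi,\psi^{-1})=\xi(-1)\gamma(\xi,\psi)$. Combining these, $\gamma(\mu^{-\sigma}\chi,\psi)$ should rewrite, via $\chi^{\sigma}=\chi$ on using triviality of $\chi$ on $F^*$ hence $\chi^{-\sigma}=\chi^{-1}\cdot(\text{something trivial})$... more carefully: $\chi$ trivial on $F^*=N_{K/F}(K^*)\cdot(\pm)$ forces $\chi^{\sigma}=\chi^{-1}$, so $\mu^{-\sigma}\chi=(\mu\chi^{-1})^{-\sigma}=(\mu\chi)^{-\sigma}\cdot\chi^{\sigma}\chi=\dots$. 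The upshot I expect is $\gamma(\mu^{-\sigma}\chi,\psi)=\gamma((\mu\chi)^{-\sigma},\psi)$ up to a sign, and then Galois-equivariance plus $\psi^{\sigma}=\psi^{-1}$ turns this into $\gamma(\mu\chi,\psi^{-1})^{\pm1}$, and the two sign factors and the $|\cdot|_K$-shift in the functional equation should cancel against $\gamma(\mu\chi,\psi)$, leaving the product equal to $1$.

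The main obstacle will be bookkeeping the characters $-1$'s and the $|\cdot|_K$-twist so that everything cancels exactly; the conceptual content is small but the arithmetic of Tate gamma factors under $\sigma$ and under $\psi\mapsto\psi^{-1}$ must be tracked precisely, and one must use at the right moment both that $\psi$ is trivial on $F$ and that $\chi$ is trivial on $F^*$ (which is exactly $N_{K/F}(K^*)$ up to finite index, forcing $\chi^{\sigma}=\chi^{-1}$). Once the identity $\gamma(\mu\chi,\psi)\gamma(\mu^{-\sigma}\chi,\psi)=1$ is established for all such $\chi$, Theorem \ref{gamma1} applies verbatim and yields that $\pi(\mu,\mu^{-\sigma})$ is distinguished, completing the proof of the Corollary and, via Theorem \ref{dist}'s proof, closing the remaining implication there.
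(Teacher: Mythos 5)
Your overall strategy is exactly the paper's: apply Theorem \ref{gamma1} after checking the central character is trivial on $F^*$ and the $\gamma$-factor condition holds, using multiplicativity of $\gamma$ for principal series, Galois-equivariance, $\psi^{\sigma}=\psi^{-1}$ and $\chi^{\sigma}=\chi^{-1}$. But you explicitly flag that you have not finished the bookkeeping, and in fact the intermediate algebra as written is wrong: from $\chi^{\sigma}=\chi^{-1}$ you get $\chi=\chi^{-\sigma}$, hence $\mu^{-\sigma}\chi=\mu^{-\sigma}\chi^{-\sigma}=(\mu\chi)^{-\sigma}$ (whereas your $(\mu\chi^{-1})^{-\sigma}$ equals $\mu^{-\sigma}\chi^{\sigma}=\mu^{-\sigma}\chi^{-1}$, which is a different character). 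There is also no sign ambiguity and no $|\cdot|_K$-shift to track: the relevant form of the Tate functional equation at the central point is $\gamma(\xi,\psi)\,\gamma(\xi^{-1},\psi^{-1})=1$ (not $\gamma(\xi,\psi)\gamma(\xi^{-1}|\cdot|_K,\psi)=1$, which is at $s=0$ and with the wrong additive character). With the correct pieces the computation closes in one line, exactly as in the paper: $\gamma(\mu^{-\sigma}\chi,\psi)=\gamma\bigl((\mu\chi)^{-\sigma},\psi\bigr)=\gamma\bigl((\mu\chi)^{-1},\psi^{\sigma}\bigr)=\gamma\bigl((\mu\chi)^{-1},\psi^{-1}\bigr)$, so $\gamma(\mu\chi,\psi)\,\gamma(\mu^{-\sigma}\chi,\psi)=\gamma(\mu\chi,\psi)\,\gamma\bigl((\mu\chi)^{-1},\psi^{-1}\bigr)=1$.

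One further caution: your parenthetical irreducibility argument ("if $\mu/\mu^{-\sigma}=|\cdot|_K$ then restriction to $F^*$ gives a contradiction") does not hold as stated, since $\mu\mu^{\sigma}=|\cdot|_K$ is perfectly possible (e.g.\ $\mu=|\cdot|_K^{1/2}$), in which case the restriction to $F^*$ is simply $|\cdot|_F^2$ on both sides. Irreducibility here comes from the standing hypothesis made when the principal series $\pi(\chi)$ was introduced (no quotient of the $\chi_i$ equals $|\cdot|_K$), not from the triviality conditions alone.
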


\begin{proof}
indeed, first we notice that the central character $\mu \mu^{-\sigma}$
 of $\pi(\mu, \mu ^{-\sigma})$ is trivial on $F^*$.\\
Now let $\chi$ be a character of $K^* / F^*$, then $\gamma(\pi(\mu, \mu
 ^{-\sigma}) \otimes \chi,\psi)=\gamma(\mu\chi,\psi)\gamma(\mu
 ^{-\sigma}\chi,\psi)=\gamma(\mu \chi,\psi)\gamma(\mu^{-1}\chi^{\sigma} ,\psi
 ^{\sigma})$, and as $\psi |F=1$ and $\chi_{|F^*} =1$, one has $\psi
 ^{\sigma}=\psi ^{-1}$ and $\chi ^{\sigma}=\chi ^{-1}$, so that $\gamma(\pi(\chi,
 \chi ^{-\sigma}),\psi)=\gamma(\mu\chi,\psi)\gamma(\mu ^{-1}\chi ^{-1}
 ,\psi ^{-1})=1$.
The conclusion falls from Proposition \ref{gamma1}. \end{proof}

Assuming Theorem 1.2 of \cite{AG}, the converse of Theorem \ref{gamma1} is also true:\\

\begin{thm}
\label{gamma2}

 Let $\pi$ be a smooth irreducible representation of
 infinite dimension of $G_2(K)$ with central character trivial on $F^*$ and $\psi$ a non trivial character of $K/F$, it is distinguished if and only if $\gamma (\pi \otimes \chi ,\psi)=1 $ for every character $\chi$ of $K^*$ trivial on $F^*$.

\end{thm}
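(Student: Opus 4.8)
The plan is to establish the remaining implication: if $\pi$ is a smooth irreducible infinite-dimensional representation of $G_2(K)$ with central character trivial on $F^*$ and $\pi$ is distinguished, then $\gamma(\pi\otimes\chi,\psi)=1$ for every character $\chi$ of $K^*$ trivial on $F^*$. The converse direction is already Theorem \ref{gamma1}, so only this forward direction needs a proof, and the natural strategy is to exploit uniqueness (multiplicity one, Proposition 11 of \cite{F}) together with the explicit construction of an invariant functional carried out inside the Kirillov model.

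First I would fix $\chi$ a character of $K^*/F^*$ and let $L$ be the (up to scalar unique) nonzero $G_2(F)$-invariant linear form on $\pi$, realized in the Kirillov model $\kir$. Restricting $L$ to $D(K^*)$, I would argue as in the proof of Theorem \ref{gamma1} that $L$ must agree, up to a constant, with the functional $\phi\mapsto\int_{F^*}\phi(t)\,d^*t$ on $D(K^*)$ — indeed $L$ is $M(F)$-invariant, $\psi$ is trivial on $F$, and a short computation with the action of $\left(\begin{smallmatrix}a&x\\0&1\end{smallmatrix}\right)$, $a\in F^*$, $x\in F$, pins down such functionals on $D(K^*)$ uniquely (this is where Proposition 2.9, ch.1 of \cite{JL} — that $\phi-\pi\left(\begin{smallmatrix}1&x\\0&1\end{smallmatrix}\right)\phi\in D(K^*)$ — is used, and the convergence issue is handled by Proposition 2.9 as in the introduction of this section). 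Then $w$-invariance of $L$, written out via the functional equation \eqref{eq} with the character $c_\pi\chi$ (legitimate since $c_\pi|_{F^*}=1$ and $\chi|_{F^*}=1$ so the relevant integrals make sense after Fourier inversion and the substitution $x\mapsto x^{-1}$, exactly the manipulation performed in the proof of Theorem \ref{gamma1}), forces the identity $\int_{F^*}\pi(w)\phi(t)\,d^*t=\gamma(\pi\otimes\chi,\psi)\int_{F^*}\phi(t)\,d^*t$ for $\phi\in D(K^*)\cap\pi(w)D(K^*)$; comparing with the $w$-invariance already in hand gives $\gamma(\pi\otimes\chi,\psi)=1$.

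The point where Theorem 1.2 of \cite{AG} enters — and the main obstacle — is justifying that the functional $L$, \emph{a priori} only known to be $G_2(F)$-invariant and nonzero somewhere on $\kir$, is nonzero on the subspace $D(K^*)$, equivalently that its restriction to $D(K^*)$ is not identically zero. Without this, the chain of equalities above is vacuous: if $L|_{D(K^*)}=0$ then $L$ is supported on $\pi(w)D(K^*)$ and the argument via the functional equation at $1/2$ does not engage. The role of \cite{AG} (the Gelfand–Kazhdan type result on $(G_2(F),\mathbf{1})$-distinction, or more precisely the statement that a $G_2(F)$-invariant functional on a generic representation restricts nontrivially to the space of Whittaker/Kirillov functions supported near the mirabolic orbit) is precisely to guarantee this nonvanishing; I would invoke it here, quoting Theorem 1.2 of \cite{AG} to conclude that the unique $L$ does not vanish on $D(K^*)$, and then the computation of the previous paragraph completes the proof. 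The converse implication being already proved, combining the two directions yields the biconditional in Theorem \ref{gamma2}.

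I would close by noting the consistency check that this gives an alternative route to Corollary \ref{cordist2} and hence, via Lemma \ref{induced} and Theorem \ref{dist}, recovers the $GL(2,K)$ case of the classification, matching the assertion in the introduction that this section reproves the result of \cite{FH}; no new computation is needed for that remark beyond what Corollary \ref{cordist2} and Theorem \ref{gamma1} already supply.
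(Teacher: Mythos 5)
Your overall shape is right (reduce to one implication, invoke Hakim's result that $L|_{D(K^*)}$ is a multiple of $\phi\mapsto\int_{F^*}\phi(t)\,d^*t$, use $w$-invariance, and cite \cite{AG} somewhere), but two steps go wrong. First, the claimed identity $\int_{F^*}\pi(w)\phi(t)\,d^*t=\gamma(\pi\otimes\chi,\psi)\int_{F^*}\phi(t)\,d^*t$ does not follow from ``$w$-invariance written out via the functional equation and Fourier inversion.'' The functional equation \eqref{eq} relates integrals over $K^*$, and passing from $K^*$ to $F^*$ by Fourier inversion over $K^*/F^*$ requires summing over all $\chi$; since $\gamma(\pi\otimes\chi,\psi)$ depends on $\chi$, it cannot be pulled out of that sum. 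In Theorem \ref{gamma1} this manipulation was legitimate precisely because $\gamma(\pi\otimes\chi,\psi)$ was \emph{assumed} constant equal to $1$. The paper goes the other way: from the $\chi$-independent relation $\int_{F^*}\pi(w)\phi=\int_{F^*}\phi$ (a consequence of $w$-invariance and Hakim's description of $L|_{D(K^*)}$), one integrates over $K^*/F^*$ to obtain $\int_{K^*}\pi(w)\phi(x)\,c_\pi^{-1}(x)\,d^*x=\int_{K^*}\phi(x)\,d^*x$ on $D(K^*)\cap\pi(w)D(K^*)$, and then compares this with \eqref{eq} at $\chi=1$.

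Second, you have misplaced the role of \cite{AG}. You use it to justify that $L|_{D(K^*)}\neq 0$, but this is automatic: $L$ is $w$-invariant and $K(\pi,\psi)=D(K^*)+\pi(w)D(K^*)$, so $L|_{D(K^*)}=0$ would force $L|_{\pi(w)D(K^*)}=0$, hence $L=0$. The genuine obstruction is elsewhere: the comparison in the previous paragraph only yields $\gamma(\pi,\psi)=1$ provided $\int_{K^*}\phi(x)\,d^*x$ is not identically zero on $D(K^*)\cap\pi(w)D(K^*)$. If it were identically zero, one could define two independent $K^*$-covariant linear forms $\phi_1+\pi(w)\phi_2\mapsto\int_{K^*}\phi_1$ and $\phi_1+\pi(w)\phi_2\mapsto\int_{K^*}\phi_2$ on $K(\pi,\psi)$, contradicting the multiplicity one statement of Theorem 1.2 in \cite{AG}. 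That is the actual point at which \cite{AG} is indispensable, and it is not addressed by your argument. Finally, to obtain $\gamma(\pi\otimes\chi,\psi)=1$ for \emph{all} $\chi$ trivial on $F^*$, one simply notes that $\pi\otimes\chi$ is again distinguished (since $\chi\circ\det$ is trivial on $G_2(F)$) and applies the same computation to $\pi\otimes\chi$; there is no need for, and no valid, single Fourier-theoretic step producing all $\chi$ at once.
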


\begin{proof} It suffices to show that if $\pi$ is a smooth irreducible distinguished representation of
 infinite dimension of $G_2(K)$, and $\psi$ a non trivial character of $K/F$, then $\gamma(\pi ,\psi)=1$.
Suppose $\lambda$ is a non zero $G_2(F)$-invariant linear form on
 $K(\pi, \psi)$, it is shown in the proof of the corollary of Proposition 3.3
 in \cite{H}, that its restriction to $D(F^*)$ must be a multiple of
 the Haar measure on $F^*$.  
Hence for any function $\phi$ in $D(K^*) \cap \pi(w)D(K^*)$, we must have
$\int_{F^*} \phi(t)d^*t = \int_{F^*} \pi(w)\phi (t)d^*t$.\\
From this one deduces that for any function in $D(K^*) \cap
 \pi(w)D(K^*)$: 
 $$\aligned \int_{K^*}  \pi (w) \phi (x) c_{\pi} ^{-1} (x)   d^* x & = \int_{K^*/F^*} c_{\pi} ^{-1} (a) \int_{F^*}\pi (w) \phi (ta)d^* t da \\
                      &= \int_{K^*/F^*} c_{\pi} ^{-1} (a) \int_{F^*}\pi(\begin{array}{lll} a &
 0 \\ 0& 1 \\ \end{array})\pi (w) \phi (t)d^* t da \\
                      &=\int_{K^*/F^*} c_{\pi} ^{-1} (a) \int_{F^*}\pi (w)
 c_{\pi}(a)\pi(\begin{array}{lll} a^{-1} & 0 \\ 0& 1 \\ \end{array}) \phi (t)d^* t da\\
                      &=  \int_{K^*/F^*}
 \int_{F^*}\pi(\begin{array}{lll} a^{-1} & 0 \\ 0& 1 \\ \end{array}) \phi (t)d^* t da\\
                      &=\int_{K^*/F^*} \int_{F^*}\phi (ta^{-1})d^* t da \\
                      &=\int_{K^*/F^*} \int_{F^*}\phi (ta)d^* t da \\ 
                      &=\int_{K^*} \phi(x) d^* x \endaligned.$$
This implies that either $\gamma(\pi,\psi)$ is equal to one,
 or $\int_{K^*} \phi(x) d^* x$ is equal to zero on $D(K^*) \cap
 \pi(w)D(K^*)$.
In the second case, we could define two independant $K^*$-invariant
 linear forms on $K(\pi, \psi)= D(K^*) + \pi(w)D(K^*)$, given by $\phi_1 +
 \pi(w)\phi_2 \mapsto \int_{K^*} \phi_1(x) d^* x$, and $\phi_1 +
 \pi(w)\phi_2 \mapsto \int_{K^*} \phi_2(x) d^* x$. This would contradict Theorem 1.2 of \cite{AG}.  \end{proof}
 
\section*{Acknowledgements}
I would like to thank Corinne Blondel and Paul G\'{e}rardin, for many helpfull comments and suggestions.

\end{document}